\newtheorem{thm}{Theorem}[section]
\newtheorem{prop}[thm]{Proposition}
\newtheorem{defn}[thm]{Definition}
\newtheorem{lem}[thm]{Lemma}
\newtheorem{rem}[thm]{Remark}
\renewcommand{\leq}{\leqslant}
\renewcommand{\geq}{\geqslant}
\title{The Multiplicities of Root Number Functions}
\author{Stefan-Christoph Virchow}
\begin{document}

\begin{abstract} 
We consider the $q$th root number function for the symmetric group. Our aim is to develop an asymptotic formula for the multiplicities of the $q$th root number function as $q$ tends to $\infty$. We use character theory, number theory and combinatorics. 
\end{abstract}

\maketitle

\section{Introduction}
Let $q$ be a positive integer. We define the $q$th root number function $r_q\colon S_n\to\mathbb{N}_0$ via
\begin{align*}
r_q(\pi):=\#\{\sigma\in S_n:~\sigma^q=\pi\}.
\end{align*}
Obviously, $r_q$ is a class function of the symmetric group $S_n$. For each irreducible character $\chi$ of $S_n$ let
\begin{equation*}
m_{\chi}^{(q)}:=\langle r_q,\chi\rangle
\end{equation*}
be the multiplicity of $\chi$ in $r_q$.\par

Scharf \cite{Sc1} proved that the $q$th root number functions $r_q$ are proper characters, that is, the multiplicities $m_{\chi}^{(q)}$ are non-negative integers. A good account of results on root number functions can be found in \cite[Chapter 6.2 and 6.3]{Ke1}.
\par
We now pay attention to the multiplicities $m_{\chi}^{(q)}$. Müller and Schlage-Puchta established estimates for the multiplicities $m_{\chi_{\lambda}}^{(q)}$ (cf. \cite[Proposition 2 and 3]{Mu1}). In addition, they showed the following: Let $\Delta\in\mathbb{N}$ be fixed and let $q\geq 2$ be an integer. Given a partition $\mu\vdash\Delta$, there exists some constant $C_{\mu}^q$, depending only on $\mu$ and $q$, such that for $n$ sufficiently large and for a partition $\lambda=(\lambda_1,\ldots,\lambda_l)\vdash n$ with $\lambda\backslash \lambda_1:=(\lambda_2,\ldots,\lambda_l)=\mu$, we have $m_{\chi_{\lambda}}^{(q)}=C_{\mu}^q$. In particular, we get
\begin{align*}
C_{(1)}^q&=\sigma_0(q)-1\\
C_{(2)}^q&=\frac{1}{2}(\sigma_1(q)+\sigma_0(q)^2-3\sigma_0(q)+\sigma_0'(q))\\
C_{(1,1)}^q&=\frac{1}{2}(\sigma_1(q)+\sigma_0(q)^2-3\sigma_0(q)-\sigma_0'(q))+1,
\end{align*}
where $\sigma_0'(q)$ is the number of odd divisors of $q$.\par
Our aim is to generalize this result. We establish an asymptotic formula for the multiplicities $m_{\chi_{\lambda}}^{(q)}$ as $q$ tends to $\infty$. More precisely, we claim the following:

\begin{thm}
\label{3-1-1}
Let $q\in\mathbb{N}$ be sufficiently large and let $\Delta\in\mathbb{N}$ with $\Delta\leq \frac{\log q}{\log 2}$. In addition, let $n\geq \Delta q$ be an integer. Then, for partitions $\lambda\vdash n$ and $\mu\vdash \Delta$ with $\lambda\backslash\lambda_1=\mu$, we have
\begin{align*}
m_{\chi_{\lambda}}^{(q)}=\begin{cases}
\sigma_0(q)+\mathcal{O}(1) &\text{if }\Delta=1\\[0.5ex]
\frac{1}{2}\sigma_1(q)+\mathcal{O}\left((\sigma_0(q))^2\right) &\text{if }\Delta=2\\[0.5ex]
\frac{\chi_{\mu}(1)}{6}\sigma_2(q)+\mathcal{O}(\sigma_0(q)\sigma_1(q)) &\text{if }\Delta=3\\[0.5ex]
\frac{\chi_{\mu}(1)}{\Delta!}\sigma_{\Delta-1}(q)+\mathcal{O}\left(\frac{\chi_{\mu}(1)}{\Delta!}q^{\Delta-2}\left(\Delta\sigma_0(q)+2^{\Delta}\right)\right) &\text{if }\Delta\geq 4.
\end{cases}
\end{align*}
\par\bigskip\noindent
The $\mathcal{O}$-constant is universal.
\end{thm}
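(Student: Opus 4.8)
The plan is to turn the multiplicity into a Hall inner product of symmetric functions, to exploit that $\lambda_1=n-\Delta$ is a very long first row via Bernstein's vertex operator, and thereby to reduce the whole quantity to a single coefficient of a one--variable polynomial whose degree never exceeds $q\Delta$ --- which is exactly why the hypothesis $n\geq\Delta q$ suffices. Concretely, orthogonality and the reality of the $S_n$--characters give
\[
m_{\chi_\lambda}^{(q)}=\frac1{n!}\sum_{\sigma\in S_n}\chi_\lambda(\sigma^q)=\sum_{\tau\vdash n}\frac{\chi_\lambda(\tau^q)}{z_\tau}=\Bigl\langle s_\lambda,\ \sum_{\tau\vdash n}\frac{\Phi_q(p_\tau)}{z_\tau}\Bigr\rangle,
\]
where $\Phi_q$ is the ring operator determined by $\Phi_q(p_k)=p_{k/\gcd(k,q)}^{\gcd(k,q)}$ (a $k$--cycle raised to the $q$--th power splits into $\gcd(k,q)$ cycles of length $k/\gcd(k,q)$) and $\langle\cdot,\cdot\rangle$ is the Hall inner product, which corresponds under the characteristic map to the character inner product. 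Summing the exponential series and reindexing the exponent by the bijection $k\leftrightarrow(d,j)$ with $d=\gcd(k,q)\mid q$, $j=k/d$, $\gcd(j,q/d)=1$, this becomes
\[
m_{\chi_\lambda}^{(q)}=\Bigl\langle s_\lambda,\ \exp\Bigl(\sum_{d\mid q}\ \sum_{\gcd(j,q/d)=1}\frac{p_j^{\,d}}{jd}\Bigr)\Bigr\rangle,
\]
the degree truncation being automatic because $s_\lambda$ is homogeneous of degree $n$.

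Since $n-\Delta\geq\mu_1$, Bernstein's formula gives $s_\lambda=s_{(n-\Delta,\mu)}=\sum_{i\geq0}(-1)^i h_{n-\Delta+i}\,s_{\mu/(1^i)}$, a finite sum with $0\leq i\leq\ell(\mu)\leq\Delta$. Pushing each $h_{n-\Delta+i}$ across the inner product by skewing and using that $\sum_{m\geq0}h_m^\perp z^m$ is the substitution operator $p_k\mapsto p_k+z^k$, one computes that applying it to the exponential above produces $(1-z)^{-1}\exp(T(z))$, where the pure--$z$ part of the new exponent reindexes to $\sum_{i\geq1}z^i/i=-\log(1-z)$ and
\[
T(z):=\sum_{d\mid q}\ \sum_{\gcd(j,q/d)=1}\ \sum_{e=1}^{d}\frac1{jd}\binom de\,p_j^{\,e}\,z^{\,j(d-e)}.
\]
Writing $(F)_r$ for the homogeneous component of degree $r$ and $g_i(z):=\bigl\langle s_{\mu/(1^i)},\,(\exp T(z))_{\Delta-i}\bigr\rangle$, we obtain the exact identity $m_{\chi_\lambda}^{(q)}=\sum_{i\geq0}(-1)^i\,[z^{\,n-\Delta+i}]\,g_i(z)/(1-z)$. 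A monomial of $T(z)$ has $z$--degree $j(d-e)\leq(q-1)j$ and $p$--degree $je\geq j$, so a product of such monomials of total $p$--degree $\Delta-i$ has $z$--degree at most $(q-1)(\Delta-i)$; hence $g_i$ is a polynomial in $z$ of degree $\leq(q-1)(\Delta-i)$. Because $n\geq\Delta q$ forces $n-\Delta+i\geq q(\Delta-i)>\deg_z g_i$, the extraction $[z^{\,n-\Delta+i}]g_i(z)/(1-z)$ is simply $g_i(1)$, whence
\[
m_{\chi_\lambda}^{(q)}=\sum_{i=0}^{\ell(\mu)}(-1)^i\,g_i(1),\qquad g_i(1)=\Bigl\langle s_{\mu/(1^i)},\ \Bigl(\exp\Bigl(\sum_{d\mid q}\sum_{\gcd(j,q/d)=1}\sum_{e=1}^{d}\tfrac1{jd}\tbinom de\,p_j^{\,e}\Bigr)\Bigr)_{\!\Delta-i}\Bigr\rangle,
\]
a closed expression independent of $n$ (in particular this re--derives the eventual constancy of $m_{\chi_\lambda}^{(q)}$ in $n$ used by M\"uller and Schlage--Puchta).

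Finally one reads off the main term and estimates the remainder. Inside $g_0(1)$, the linear term of the exponential contributes the monomial $\beta_\Delta\,p_1^{\,\Delta}$ with $\beta_\Delta=\sum_{d\mid q}\binom d\Delta/d=\frac1{\Delta!}\sum_{d\mid q}(d-1)(d-2)\cdots(d-\Delta+1)$, and $\langle s_\mu,\beta_\Delta p_1^{\,\Delta}\rangle=\beta_\Delta\,\chi_\mu(1)$; since $\beta_\Delta=\sigma_{\Delta-1}(q)/\Delta!+(\text{lower--order divisor sums})$, this is the asserted main term $\tfrac{\chi_\mu(1)}{\Delta!}\sigma_{\Delta-1}(q)$. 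The error is the sum of four groups of terms: (a) $\chi_\mu(1)\bigl(\beta_\Delta-\sigma_{\Delta-1}(q)/\Delta!\bigr)$, a combination of $\sigma_k(q)$ with $k\leq\Delta-2$; (b) the remaining degree--$\Delta$ monomials of the linear term, namely those with $j\geq2$, whose coefficients are bounded by $\sigma_{e-1}(q)$ with $e\leq\Delta/2$; (c) the contributions of $\tfrac1{s!}T(1)^s$ for $s\geq2$, i.e.\ products of divisor sums, the largest of which is $\asymp\sigma_0(q)\,\sigma_{\Delta-2}(q)$; and (d) the terms with $i\geq1$, which sit one degree lower. These are controlled using $|\langle s_\nu,p_\rho\rangle|=|\chi_\nu(\rho)|\leq\chi_\nu(1)$, the bound $\chi_{\mu/(1^i)}(1)\leq\chi_\mu(1)$, the fact that there are fewer than $2^\Delta$ partitions of any integer $\leq\Delta$, and the elementary estimates $\sigma_k(q)\leq\zeta(k)q^{k}$ for $k\geq2$ and $\sigma_1(q)\ll q\log q$, together with $2^{\Delta}\leq q$; summing them and retaining the dominant form in each range reproduces the four error terms in the statement, with $\mathcal O\!\bigl(\tfrac{\chi_\mu(1)}{\Delta!}q^{\Delta-2}(\Delta\sigma_0(q)+2^{\Delta})\bigr)$ the one that survives for $\Delta\geq4$ while the coarser $\mathcal O(1)$, $\mathcal O(\sigma_0(q)^2)$, $\mathcal O(\sigma_0(q)\sigma_1(q))$ are comfortable for $\Delta=1,2,3$. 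The principal obstacle is precisely this last step: bounding every subleading divisor--sum $\times$ character contribution with a single universal implied constant, uniformly over the range $\Delta\leq\log q/\log 2$.
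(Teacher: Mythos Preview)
Your route via symmetric functions and Bernstein's vertex operator is genuinely different from the paper's and has real advantages. The paper stays entirely on the permutation side: it uses a Murnaghan--Nakayama--based expansion of $\chi_\lambda(\pi)$ as a polynomial in the cycle counts $c_i(\pi)$ with leading term $\chi_\mu(1)\binom{c_1(\pi)}{\Delta}$, then computes the moments of $c_1(\pi^q)$ directly from the identity $c_d(\pi^q)=\sum_{k/(k,q)=d}(k,q)\,c_k(\pi)$ together with the exact joint falling--factorial moments of the $c_k$ (their Proposition~4.1). Your reduction to the closed expression $\sum_i(-1)^ig_i(1)$ is conceptually cleaner, proves the eventual $n$--independence in one stroke rather than as a separate observation, and isolates the main term $\chi_\mu(1)\sigma_{\Delta-1}(q)/\Delta!$ transparently as the $j=1$, $e=\Delta$ piece of the linear part of $\exp T(1)$. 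The inequality $\chi_{\mu/(1^i)}(1)\le\chi_\mu(1)$ you invoke is correct (compare the branching decomposition of $\chi_\mu\!\downarrow_{S_{\Delta-i}}$ with Pieri's rule for $e_i^\perp s_\mu$).

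However, you explicitly name the gap yourself: the error estimate is only outlined. In the paper this is the content of Sections~5 and~6, and it is where the genuine work sits. Your groups (a), (b), (d) are indeed minor, but group~(c) --- the contribution of $\tfrac{1}{s!}T(1)^s$ for $s\ge2$ --- is precisely what the paper treats as the set--partition sum $\sum_{l\ge2}\sum_{\{M_1,\dots,M_l\}}\prod_j T_{|M_j|}$ in Lemma~5.1, and it is not enough to say ``the largest product is $\asymp\sigma_0(q)\sigma_{\Delta-2}(q)$''. The $l=2$ layer \emph{alone} already produces the full $q^{\Delta-2}\bigl(\Delta\sigma_0(q)+2^\Delta\bigr)$, and showing that the pieces $\binom{\Delta}{1}T_1T_{\Delta-1}$, $\binom{\Delta}{2}T_2T_{\Delta-2}$, and $\sum_{|M_i|\ge3}T_{|M_1|}T_{|M_2|}$ all fit under that envelope (the paper's step~7) requires, for $\Delta\ge5$, the side inequality $\Delta^2\sigma_1(q)/q\ll\max\{2^\Delta,\Delta\sigma_0(q)\}$ and, for $l\ge3$, the Stirling bound $\sum_{l\ge3}{\Delta\brace l}(q^{-1}\max\{3,\sigma_0(q)\})^l\ll 2^\Delta q^{-2}$. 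None of this is hard, but it must be written out with uniform constants over $\Delta\le\log q/\log 2$; as it stands your proof stops exactly where the paper's detailed estimates begin.
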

\begin{rem}
\label{3-1-2}
The error term in our asymptotic formula is essentially optimal.
\end{rem}
\noindent
\par\bigskip
The proof of our Theorem proceeds as follows: At  first, we realize that for $\pi\in S_n$ the value $\chi_{\lambda}(\pi)$ is a polynomial in the number $c_i(\pi)$ of $i$-cycles of the permutation $\pi$ for $i=1,\ldots,n$. We use this result to establish a formula with main and error term for $m_{\chi_{\lambda}}^{(q)}$, where the random variables $c_i$ appear again. Secondly, we summarize identities and estimates for Stirling numbers of the first and second kind and then we review  bounds for the divisor function. Thirdly, we examine the distribution of cycle in $S_n$ and compute the mean of $(c_{k_1})^{m_1}\cdot\ldots\cdot(c_{k_j})^{m_j}$. The formula for the mean includes Stirling numbers of the second kind. Finally, we calculate the main and the error term of $m_{\chi_{\lambda}}^{(q)}$ obtained in the first step using the outcomes of step two and three.
\par\bigskip\noindent
\textbf{Some notation.} A \emph{partition} of $n$ is a sequence $\lambda=(\lambda_1,\lambda_2,\ldots,\lambda_l)$ of positive integers such that $\lambda_1\geq\lambda_2\geq\ldots\geq\lambda_l$ and $\lambda_1+\lambda_2+\ldots+\lambda_l=n$. We write $\lambda\vdash n$ to indicate that $\lambda$ is a partition of $n$. By $\lambda\backslash \lambda_1$ we mean the partition $\lambda\backslash \lambda_1=(\lambda_2,\lambda_3,\ldots,\lambda_l)$. The \textit{weight} $|\lambda|$ of $\lambda$ is $|\lambda|=\sum_{j=1}^l\lambda_j$. For partitions $\lambda,\mu$ we write $\mu\subset\lambda$, if $\mu_j\leq\lambda_j$ for all $j$.\\
We denote by $\chi_{\lambda}$ the irreducible character of the symmetric group $S_n$ corresponding to the partition $\lambda$ of $n$.\\
For a permutation $\pi\in S_n$ and $1\leq i\leq n$ let $c_i(\pi)$ be the number of $i$-cycles of $\pi$.\\
Furthermore, for $\alpha\in\mathbb{R}$ let 
\begin{equation*}
\sigma_{\alpha}(q):=\sum_{d|q}d^{\alpha}
\end{equation*}
be the \emph{divisor function}. As usual, we denote by $\zeta(s)$ the Riemann zeta function and we write $(n,k)$ for the greatest common divisor of $n$ and $k$.\\
We denote by ${n\brack k}$ and ${n\brace k}$ the \textit{Stirling numbers of the first and second kind}, respectively. Finally, $(x)_n$ denotes the falling factorial.

\section{Proof of the Theorem}
In this section, we use character theory to derive a formula with main and error term for the multiplicities $m_{\chi_{\lambda}}^{(q)}$. We apply this result to prove our Theorem.\\
Müller and Schlage-Puchta \cite[Lemma 7]{Mu1} established the following.
\begin{lem}
\label{3-2-1}
Let $\lambda\vdash n$ be a partition, $\mu=\lambda\backslash\lambda_1$, and let $\pi\in S_n$ be a permutation. Then
\begin{equation*}
\chi_{\lambda}(\pi)=\sum_{\substack{\tilde{\mu}\subseteq\mu\\\tilde{\mu}_1\leq 1}}(-1)^{|\tilde{\mu}|}\sum_{\boldsymbol{c}\subseteq S_{|\mu|-|\tilde{\mu}|}}\chi_{\mu,\tilde{\mu}}(\boldsymbol{c})\prod_{i\leq|\mu|}\binom{c_i(\pi)}{c_i},
\end{equation*}
where $\boldsymbol{c}$ runs over all conjugacy classes of $S_{|\mu|-|\tilde{\mu}|}$, $\chi_{\mu,\tilde{\mu}}(\boldsymbol{c})$ denotes the number of ways to obtain $\tilde{\mu}$ from $\mu$ by removing rim hooks according to the cycle structure of $\boldsymbol{c}$, counted with the sign prescribed by the Murnaghan-Nakayama rule\footnote{Cf. for instance \cite[Theorem 4.10.2]{Sa1}}, and $c_i$ is the number of $i$-cycles of an element of $\boldsymbol{c}$.
\end{lem}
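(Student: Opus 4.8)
The plan is to translate everything into symmetric functions. Write $\rho$ for the cycle type of $\pi$, let $r=\ell(\mu)$ be the number of parts of $\mu$, and let $p,h,s$ denote the power-sum, complete homogeneous and (skew) Schur functions, with $\langle\cdot,\cdot\rangle$ the Hall inner product. The starting point is the Frobenius identity $\chi_{\lambda}(\pi)=\langle s_{\lambda},p_{\rho}\rangle$. Since $\lambda=(\lambda_1,\mu_1,\ldots,\mu_r)$, I would expand the Jacobi--Trudi determinant $s_{\lambda}=\det\bigl(h_{\lambda_i-i+j}\bigr)_{1\le i,j\le r+1}$ along its first row. Setting $t=j-1$ this reads
\begin{equation*}
s_{(\lambda_1,\mu)}=\sum_{t=0}^{r}(-1)^{t}\,h_{\lambda_1+t}\,M_{t+1},
\end{equation*}
where $M_{t+1}$ is the minor obtained by deleting the first row and the $(t+1)$-st column. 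This minor does not involve $\lambda_1$, and a comparison of entries shows that it is exactly the skew Jacobi--Trudi determinant of the shape $\mu/(1^{t})$, so $M_{t+1}=s_{\mu/(1^{t})}$. The index range $0\le t\le r$ is precisely the set of subpartitions $\tilde\mu=(1^{t})\subseteq\mu$ with $\tilde\mu_1\le 1$.

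Next I would use two standard facts: the multiplicativity
\begin{equation*}
\langle fg,p_{\rho}\rangle=\sum_{p_{\sigma}p_{\tau}=p_{\rho}}\langle f,p_{\sigma}\rangle\,\langle g,p_{\tau}\rangle\prod_{i}\binom{c_i(\rho)}{c_i(\sigma)},
\end{equation*}
the sum being over pairs of partitions $(\sigma,\tau)$, and the identity $\langle h_m,p_{\sigma}\rangle=1$ for every partition $\sigma\vdash m$. Applying these with $f=h_{\lambda_1+t}$ and $g=s_{\mu/(1^{t})}$, and using that $s_{\mu/(1^{t})}$ is homogeneous of degree $|\mu|-t$ (so $|\tau|=|\mu|-t$ and then $\sigma=\rho\backslash\tau$ with $|\sigma|=\lambda_1+t$ is forced), one obtains
\begin{equation*}
\chi_{(\lambda_1,\mu)}(\pi)=\sum_{t=0}^{r}(-1)^{t}\sum_{\substack{\tau\subseteq\rho\\|\tau|=|\mu|-t}}\langle s_{\mu/(1^{t})},p_{\tau}\rangle\prod_{i}\binom{c_i(\pi)}{c_i(\tau)}.
\end{equation*}
Here the long first row has simply absorbed every cycle not assigned to $\tau$, each contributing the trivial factor $1$; in particular no hypothesis on the size of $\lambda_1$ enters. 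Finally I would invoke the skew Murnaghan--Nakayama rule: $\langle s_{\mu/(1^{t})},p_{\tau}\rangle$ is the number of ways of removing rim hooks of sizes prescribed by $\tau$ from $\mu$ so as to reach $(1^{t})$, each counted with the Murnaghan--Nakayama sign. Rewriting the inner sum over $\tau$ as a sum over conjugacy classes $\boldsymbol c$ of $S_{|\mu|-t}$ (so that $c_i=c_i(\tau)$) and putting $\tilde\mu=(1^{t})$ then yields exactly the asserted formula; the restriction $i\le|\mu|$ in the product is automatic since $\tau\vdash|\mu|-t$.

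The two symmetric-function identities and the skew Murnaghan--Nakayama rule are routine. The one step requiring genuine care is the identification $M_{t+1}=s_{\mu/(1^{t})}$: one must check that deleting the $(t+1)$-st column of the Jacobi--Trudi matrix of $(\lambda_1,\mu)$ shifts the remaining column indices in exactly the way dictated by subtracting the vertical strip $(1^{t})$ in the skew Jacobi--Trudi formula. A secondary bookkeeping point is matching index sets: that $t$ ranging over $\{0,\ldots,r\}$ corresponds to $\tilde\mu$ ranging over all subpartitions of $\mu$ with $\tilde\mu_1\le 1$, and that the condition $\boldsymbol c\in S_{|\mu|-|\tilde\mu|}$ emerges from homogeneity rather than being imposed. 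As a sanity check, specializing to $\mu=(1),(2),(1,1)$ should recover the known character polynomials, e.g. $\chi_{(\lambda_1,(1,1))}(\pi)=\binom{c_1(\pi)}{2}-c_2(\pi)-c_1(\pi)+1$.
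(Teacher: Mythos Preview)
Your argument is correct. The paper does not actually prove this lemma: it is quoted verbatim from M\"uller and Schlage-Puchta \cite[Lemma~7]{Mu1}, so there is no in-paper proof to compare against. What you wrote is a clean self-contained derivation, and each step checks out: the first-row Laplace expansion of the Jacobi--Trudi determinant of $(\lambda_1,\mu)$ does give $\sum_{t=0}^{r}(-1)^t h_{\lambda_1+t}\,s_{\mu/(1^t)}$ (the identification $M_{t+1}=s_{\mu/(1^t)}$ is exactly the skew Jacobi--Trudi matrix with $\nu_j=1$ for $j\le t$ and $\nu_j=0$ for $j>t$); the coproduct-type identity $\langle fg,p_\rho\rangle=\sum_{\sigma\cup\tau=\rho}\langle f,p_\sigma\rangle\langle g,p_\tau\rangle\prod_i\binom{c_i(\rho)}{c_i(\tau)}$ follows from $\langle p_\alpha,p_\beta\rangle=z_\alpha\delta_{\alpha\beta}$ together with $z_\rho/(z_\sigma z_\tau)=\prod_i\binom{c_i(\rho)}{c_i(\tau)}$; and the extension of the inner sum from $\tau\subseteq\rho$ to all $\tau\vdash|\mu|-t$ is harmless because the binomial product vanishes otherwise. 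The final identification $\langle s_{\mu/(1^t)},p_\tau\rangle=\chi_{\mu,(1^t)}(\boldsymbol{c})$ is precisely the skew Murnaghan--Nakayama rule, and the bijection between $t\in\{0,\dots,r\}$ and $\{\tilde\mu\subseteq\mu:\tilde\mu_1\le1\}$ is immediate since $(1^t)\subseteq\mu$ iff $t\le\ell(\mu)$. Your sanity check for $\mu=(1,1)$ is also right.
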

\noindent
This result shows that $\chi_{\lambda}(\pi)$ is a polynomial in $c_i(\pi)$ for $i=1,\ldots,|\mu|$ with leading term $\chi_{\mu}(1)(|\mu|!)^{-1}c_1(\pi)^{|\mu|}$. We now observe:

\begin{lem}
\label{3-2-2}
Let $\lambda\vdash n$ and $\mu\vdash\Delta$ be partitions with $\mu=\lambda\backslash\lambda_1$, and let $\pi\in S_n$ be a permutation. Then we have
\begin{equation*}
\chi_{\lambda}(\pi)=\chi_{\mu}(1)\binom{c_1(\pi)}{\Delta}+\mathcal{O}\left(\chi_{\mu}(1)\sum_{j=1}^{\Delta}\binom{c_1(\pi)+\ldots+c_{j+1}(\pi)}{\Delta-j}\right).
\end{equation*}
\end{lem}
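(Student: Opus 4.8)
The plan is to read off Lemma~\ref{3-2-1} with $|\mu|=\Delta$, isolate the one summand that produces the main term, and bound the rest. In the double sum of Lemma~\ref{3-2-1} the distinguished pair is $\tilde\mu=\emptyset$ together with the class of the identity of $S_\Delta$: there $\prod_{i\le\Delta}\binom{c_i(\pi)}{c_i}=\binom{c_1(\pi)}{\Delta}$, while $\chi_{\mu,\emptyset}(\mathrm{id})$ counts the ways of stripping $\mu$ down to the empty diagram one box at a time, all with sign $+1$, hence equals the number of standard Young tableaux of $\mu$, which is $\chi_\mu(1)$. So this summand is exactly $\chi_\mu(1)\binom{c_1(\pi)}{\Delta}$, and the error term is the sum over all remaining pairs $(\tilde\mu,\boldsymbol{c})$.

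To estimate the error I would first bound the coefficients: $|\chi_{\mu,\tilde\mu}(\boldsymbol{c})|\le\chi_\mu(1)$ for every pair. Indeed, by the Murnaghan--Nakayama rule $\chi_{\mu,\tilde\mu}(\boldsymbol{c})$ is the value at $\boldsymbol{c}$ of the (genuine) character of $S_{|\mu|-|\tilde\mu|}$ afforded by the skew diagram $\mu/\tilde\mu$, so its absolute value is at most the dimension of that character, i.e.\ the number of standard skew tableaux of shape $\mu/\tilde\mu$; and that number is at most $\chi_\mu(1)$, because restricting a standard Young tableau of $\mu$ to its $|\mu|-|\tilde\mu|$ largest entries yields a standard skew tableau of some shape $\mu/\nu$ with $|\nu|=|\tilde\mu|$. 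Consequently the error is at most $\chi_\mu(1)\sum_{(\tilde\mu,\boldsymbol{c})\neq(\emptyset,\mathrm{id})}\prod_{i\le\Delta}\binom{c_i(\pi)}{c_i}$.

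The substance of the argument is the estimate of this last sum. For a class $\boldsymbol{c}$ of $S_{\Delta-t}$ (with $t=|\tilde\mu|$) the quantity $\prod_{i\le\Delta}\binom{c_i(\pi)}{c_i}$ is the number of ways of choosing a sub-collection of the cycles of $\pi$ whose multiset of lengths equals that of $\boldsymbol{c}$; summing first over all classes $\boldsymbol{c}$ of $S_{\Delta-t}$, then over $t$ (using that for each $t$ at most one $\tilde\mu$ is admissible, namely $(1^t)$), bounds the sum by the number of sub-collections of cycles of $\pi$ of total length at most $\Delta$. A sub-collection consisting of $r$ cycles of total length $\le\Delta$ can only involve cycles of length $\le\Delta-r+1$, so there are at most $\binom{c_1(\pi)+\dots+c_{\Delta-r+1}(\pi)}{r}$ of them; summing over $0\le r\le\Delta$ and removing the contribution of the distinguished pair, which is exactly the $r=\Delta$ term $\binom{c_1(\pi)}{\Delta}$, leaves $\sum_{r=0}^{\Delta-1}\binom{c_1(\pi)+\dots+c_{\Delta-r+1}(\pi)}{r}$. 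Writing $j=\Delta-r$ turns this into $\sum_{j=1}^{\Delta}\binom{c_1(\pi)+\dots+c_{j+1}(\pi)}{\Delta-j}$, and assembling the three steps gives the asserted formula with an $\mathcal O$-constant equal to $1$.

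The only place that needs real care is the bookkeeping in the last step: one must match the pair $(\emptyset,\mathrm{id})$ removed on the character side with the $r=\Delta$ (equivalently $j=0$) term removed on the counting side, and verify the ``largest cycle $\le\Delta-r+1$'' bound so that the upper summation index $\Delta-r+1$ becomes the $j+1$ appearing in the statement. The extraction of the main term and the inequality $|\chi_{\mu,\tilde\mu}(\boldsymbol{c})|\le\chi_\mu(1)$ are then routine, the latter once one invokes the interpretation of the coefficients $\chi_{\mu,\tilde\mu}(\boldsymbol{c})$ as skew-character values coming from Murnaghan--Nakayama.
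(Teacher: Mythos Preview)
Your argument is correct and follows essentially the same route as the paper: isolate the $(\tilde\mu,\boldsymbol{c})=(\emptyset,\mathrm{id})$ term, bound all remaining coefficients by $\chi_\mu(1)$, and group the leftover classes by the number $r=\Delta-j$ of parts, using that $r$ cycles of total length $\le\Delta$ each have length $\le\Delta-r+1=j+1$. The paper records this last step as the inequality $\sum_{c_1+\dots+c_{j+1}=\Delta-j}\prod_i\binom{c_i(\pi)}{c_i}\le\binom{c_1(\pi)+\dots+c_{j+1}(\pi)}{\Delta-j}$ (Vandermonde with the weight constraint dropped), whereas you phrase it via sub-collections of cycles of $\pi$; your added justification of $|\chi_{\mu,\tilde\mu}(\boldsymbol{c})|\le\chi_\mu(1)$ through skew characters is more explicit than the paper's, but the overall proof is the same.
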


\begin{proof}
Applying Lemma \ref{3-2-1}, we obtain

\begin{equation*}
\chi_{\lambda}(\pi)=\chi_{\mu}(1)\binom{c_1(\pi)}{\Delta}+\sum_{\substack{\tilde{\mu}\subseteq\mu\\\tilde{\mu_1}\leq 1}}(-1)^{|\tilde{\mu}|}\sum_{\boldsymbol{c}\subseteq S_{\Delta-|\tilde{\mu}|}}\chi_{\mu,\tilde{\mu}}(\boldsymbol{c})\prod_{i\leq\Delta}\binom{c_i(\pi)}{c_i},
\end{equation*}
where $\boldsymbol{c}$ runs over all conjugacy classes of $S_{\Delta-|\tilde{\mu}|}$ except the trivial class of $S_{\Delta}$. Therefore, we realize the expected main term. We shall show that the second term in the above formula can be absorbed into the error term.\\
At first, we observe that $|\chi_{\mu,\tilde{\mu}}(\boldsymbol{c})|\leq \chi_{\mu}(1)$ for a conjugacy class $\boldsymbol{c}$ of $S_{\Delta-|\tilde{\mu}|}$. Secondly, let $\boldsymbol{c}$ be a conjugacy class of $S_k$ with $1\leq k\leq \Delta$ and let $c_i$ be the number of $i$-cycles of an element of $\boldsymbol{c}$. Suppose that $c_1+c_2+\ldots+c_{\Delta}=\Delta-j$ for some positive integer $j$. Then we have $c_i=0$ for all $i\geq j+2$.\\
Therefore, it follows that the absolute value of the considered second term in the preceding formula is bounded above by
\begin{equation*}
\chi_{\mu}(1)\sum_{j=1}^{\Delta}\sum_{\substack{(c_1,\ldots,c_{j+1})\in\mathbb{N}_0^{j+1}\\1c_1+\ldots+(j+1)c_{j+1}\leq\Delta\\c_1+\ldots+c_{j+1}=\Delta-j}}\prod_{i\leq j+1}\binom{c_i(\pi)}{c_i}\leq\chi_{\mu}(1)\sum_{j=1}^{\Delta}\binom{c_1(\pi)+\ldots+c_{j+1}(\pi)}{\Delta-j}.
\end{equation*}
This yields our assertion.
\end{proof}
\noindent
Due to 
\begin{equation*}
m_{\chi_{\lambda}}^{(q)}=\frac{1}{n!}\sum_{\pi\in S_n}\chi_{\lambda}(\pi^q),
\end{equation*}
we obtain as immediate consequence of the preceding Lemma the following result.

\begin{prop}
\label{3-2-3}
Let $\lambda\vdash n$ and $\mu\vdash\Delta$ be partitions with $\mu=\lambda\backslash\lambda_1$, and let $q\in\mathbb{N}$. Then we get
\begin{equation*}
m_{\chi_{\lambda}}^{(q)}=\frac{\chi_{\mu}(1)}{n!}\sum_{\pi\in S_n}\binom{c_1(\pi^q)}{\Delta}+\mathcal{O}\left(\frac{\chi_{\mu}(1)}{n!}\sum_{j=1}^{\Delta}\sum_{\pi\in S_n}\binom{c_1(\pi^q)+\ldots+c_{j+1}(\pi^q)}{\Delta-j}\right).
\end{equation*}
\end{prop}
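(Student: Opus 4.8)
The plan is to average the pointwise identity of Lemma~\ref{3-2-2} over the symmetric group, nothing more. The starting point is the formula
\begin{equation*}
m_{\chi_{\lambda}}^{(q)}=\frac{1}{n!}\sum_{\pi\in S_n}\chi_{\lambda}(\pi^q)
\end{equation*}
recalled in the text just above the statement; it follows from $m_{\chi_{\lambda}}^{(q)}=\langle r_q,\chi_{\lambda}\rangle$, the fact that the irreducible characters of $S_n$ are real-valued, and the substitution $\pi=\sigma^q$ combined with $r_q(\pi)=\#\{\sigma\in S_n:\sigma^q=\pi\}$, which turns $\sum_{\pi}r_q(\pi)\chi_{\lambda}(\pi)$ into $\sum_{\sigma}\chi_{\lambda}(\sigma^q)$.

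Next I would insert Lemma~\ref{3-2-2}, applied with the permutation there taken to be $\pi^q$: for every $\pi\in S_n$,
\begin{equation*}
\chi_{\lambda}(\pi^q)=\chi_{\mu}(1)\binom{c_1(\pi^q)}{\Delta}+\mathcal{O}\!\left(\chi_{\mu}(1)\sum_{j=1}^{\Delta}\binom{c_1(\pi^q)+\ldots+c_{j+1}(\pi^q)}{\Delta-j}\right).
\end{equation*}
Here it is essential that the $\mathcal{O}$-constant in Lemma~\ref{3-2-2} is absolute, which is indeed the case: in its proof the constant arises only from the bound $|\chi_{\mu,\tilde{\mu}}(\boldsymbol{c})|\le\chi_{\mu}(1)$ and from bounding a sum of products of binomial coefficients by a single binomial coefficient, neither of which depends on $\pi$ or on $n$.

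Finally I would sum this identity over $\pi\in S_n$ and divide by $n!$. The main term immediately becomes $\frac{\chi_{\mu}(1)}{n!}\sum_{\pi\in S_n}\binom{c_1(\pi^q)}{\Delta}$. For the error term, every binomial coefficient $\binom{c_1(\pi^q)+\ldots+c_{j+1}(\pi^q)}{\Delta-j}$ is a nonnegative integer, so by the triangle inequality the summation over $\pi$ may be brought inside the $\mathcal{O}$-symbol, producing exactly $\mathcal{O}\!\left(\frac{\chi_{\mu}(1)}{n!}\sum_{j=1}^{\Delta}\sum_{\pi\in S_n}\binom{c_1(\pi^q)+\ldots+c_{j+1}(\pi^q)}{\Delta-j}\right)$, which is the asserted bound.

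There is no serious obstacle in this step: it is a direct consequence of Lemma~\ref{3-2-2} together with linearity of the averaging operator $\frac{1}{n!}\sum_{\pi\in S_n}(\cdot)$. The only points deserving a moment's attention are the uniformity of the $\mathcal{O}$-constant and the admissibility of interchanging absolute value with summation, both of which are unproblematic given the nonnegativity of the binomial coefficients involved. The substantive analytic work — evaluating the averages $\frac{1}{n!}\sum_{\pi\in S_n}\binom{c_1(\pi^q)+\ldots+c_{j+1}(\pi^q)}{\Delta-j}$ asymptotically in terms of divisor sums $\sigma_{\alpha}(q)$ — is postponed to the later sections dealing with the distribution of cycle counts in $S_n$.
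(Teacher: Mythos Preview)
Your proposal is correct and follows exactly the paper's approach: the paper declares the proposition an ``immediate consequence'' of Lemma~\ref{3-2-2} together with the identity $m_{\chi_{\lambda}}^{(q)}=\frac{1}{n!}\sum_{\pi\in S_n}\chi_{\lambda}(\pi^q)$, and your write-up simply unpacks that sentence. Your additional remarks on the absoluteness of the $\mathcal{O}$-constant and the nonnegativity of the binomial coefficients are appropriate justifications that the paper leaves implicit.
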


\noindent
Now, we give the
\begin{proof}[Proof of Theorem \ref{3-1-1}]
We stated a formula with main and error term for $m_{\chi_{\lambda}}^{(q)}$ in Proposition \ref{3-2-3}. In section 5, we will evaluate the main term: see Proposition \ref{3-5-3}. In section 6, we will estimate the error term: cf. Lemma \ref{3-6-2}.  Therefore, the proof of our Theorem is completed. Moreover, the error term in our Theorem is essentially optimal due to the Remark \ref{3-5-4}.
\end{proof}
\noindent
In the next two sections, we shall establish some auxiliary results.

\section{Combinatorics and number theory}

In this section we review some results about Stirling numbers of the first and second kind as well as basic facts about the divisor function.

\begin{defn}
Let $n$ and $k$ be positive integers. The \emph{Stirling numbers of the second kind} $n\brace k$ count the number of ways to partition a set of $n$ labeled objects into $k$ nonempty unlabeled subsets.
\end{defn}

\begin{lem}
\label{3-3-1}
Let $n$ and $k$ be positive integers.
\begin{compactenum}[1)]
\item 
We have ${n\brace 2}=2^{n-1}-1$ and ${n\brace n-1}=\binom{n}{2}$.
\item 
In addition, we state the recurrence
\begin{equation*}
k!{n\brace k}=k^n-\sum_{j=1}^{k-1}\frac{k!}{(k-j)!}{n\brace j}.
\end{equation*}
\end{compactenum}
\end{lem}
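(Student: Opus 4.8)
The plan is to prove both parts by direct combinatorial counting, using the set-partition interpretation of ${n\brace k}$ recorded in the preceding definition, together with the standard correspondence between partitions into $k$ blocks and surjections onto a $k$-element set.

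For part 1), to compute ${n\brace 2}$ I would count ordered pairs $(A,B)$ of disjoint \emph{nonempty} sets with $A\cup B=\{1,\dots,n\}$: each of the $n$ elements lies in $A$ or in $B$ independently, giving $2^n$ assignments, of which exactly two (everything in $A$, or everything in $B$) violate nonemptiness. Since in an unordered partition the two blocks are indistinguishable, I divide by $2$ and get $(2^n-2)/2=2^{n-1}-1$. For ${n\brace n-1}$ I would note that if an $n$-element set is split into $n-1$ nonempty blocks, then comparing $\sum(\text{block sizes})=n$ with the number of blocks forces exactly one block of size $2$ and $n-2$ singletons; such a partition is determined by the choice of the $2$-element block, and there are $\binom{n}{2}$ of those.

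For part 2), I would use that the number of surjections $\{1,\dots,n\}\twoheadrightarrow\{1,\dots,k\}$ equals $k!{n\brace k}$ (choose the partition of the domain into $k$ nonempty fibres, then biject the $k$ fibres with the $k$ target points in $k!$ ways). Now I would count \emph{all} functions $f\colon\{1,\dots,n\}\to\{1,\dots,k\}$, of which there are $k^{n}$, by the cardinality $j=\#f(\{1,\dots,n\})$ of the image: the image can be chosen in $\binom{k}{j}$ ways, and $f$ maps onto it surjectively in $j!{n\brace j}$ ways, so
\begin{equation*}
k^{n}=\sum_{j=1}^{k}\binom{k}{j}\,j!\,{n\brace j}=\sum_{j=1}^{k}\frac{k!}{(k-j)!}\,{n\brace j}.
\end{equation*}
Isolating the $j=k$ summand (for which $k!/(k-k)!=k!$) and moving the rest to the other side gives exactly the stated recurrence.

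I do not expect a real obstacle here; the only mildly delicate points are the size argument in the computation of ${n\brace n-1}$ (ruling out any block of size $\geq 3$) and the identity $\binom{k}{j}j!=k!/(k-j)!$ used in the bookkeeping for part 2). As an alternative to the surjection count, part 2) could be obtained by induction from the basic recurrence ${n\brace k}=k{n-1\brace k}+{n-1\brace k-1}$, but the double-counting argument is shorter and entirely self-contained.
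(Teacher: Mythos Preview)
Your proof is correct and is essentially the approach the paper has in mind: the paper simply asserts that part~1) ``follows from the definition'' and for part~2) refers to \cite[Theorem~7.2.6]{Mo1}, whereas you spell out the standard counting arguments (ordered block assignments for ${n\brace 2}$, the size argument for ${n\brace n-1}$, and the surjection/function count for the recurrence) explicitly.
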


\begin{proof}
1) follows from the definition. For the proof of 2) see \cite[Theorem 7.2.6]{Mo1}.
\end{proof}
\noindent
This recurrence yields an upper bound for $n\brace k$. Next, we would like to represent the ordinary powers $x^n$ by falling factorials $(x)_k:=x(x-1)\cdot\ldots\cdot (x-k+1)$.

\begin{lem}
\label{3-3-2}
Let $n$ be a positive integer. Then the identity
\begin{equation*}
x^n=\sum_{k=1}^n {n\brace k}(x)_k
\end{equation*}
holds.
\end{lem}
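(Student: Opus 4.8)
The statement to prove is Lemma~\ref{3-3-2}, the classical identity expressing ordinary powers in terms of falling factorials via Stirling numbers of the second kind. The plan is to give a combinatorial proof by counting functions from an $n$-element set to an $x$-element set in two ways, for every positive integer $x$; since both sides are polynomials in $x$ of degree $n$ and they agree at infinitely many points, the polynomial identity follows.

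First I would fix a positive integer $x$ and consider the set $F$ of all functions $f\colon [n]\to[x]$, where $[m]=\{1,\dots,m\}$. Plainly $|F|=x^n$, which is the left-hand side. Next I would stratify $F$ according to the size $k$ of the image of $f$: for each $k$ with $1\le k\le n$, a function with image of size exactly $k$ is determined by first choosing which $k$-element subset of $[x]$ is the image, and then choosing a surjection from $[n]$ onto that fixed $k$-set. The number of $k$-subsets is $\binom{x}{k}$, and the number of surjections from an $n$-set onto a fixed $k$-set is $k!\,{n\brace k}$ — indeed, a surjection is the same data as an ordered partition of $[n]$ into $k$ nonempty blocks (the fibers), and ${n\brace k}$ counts the unordered partitions by definition, so multiplying by $k!$ accounts for the labelling of the blocks. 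Hence the number of functions with image of size $k$ equals $\binom{x}{k}k!\,{n\brace k}=(x)_k\,{n\brace k}$, using $\binom{x}{k}k!=(x)_k$. Summing over $k$ gives $x^n=\sum_{k=1}^n {n\brace k}(x)_k$ for every positive integer $x$.

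Finally I would invoke the standard fact that a polynomial identity in $x$ holding for all positive integers $x$ holds identically: both sides are polynomials in $x$ (the right-hand side manifestly, since each $(x)_k$ is a polynomial and the ${n\brace k}$ are constants), and their difference is a polynomial with infinitely many roots, hence the zero polynomial. This establishes the identity for all $x$.

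There is no real obstacle here; the only point requiring a word of care is the bookkeeping for surjections, namely the passage from ${n\brace k}$ (unordered blocks) to $k!\,{n\brace k}$ (surjections onto a labelled target), and the identification $\binom{x}{k}k!=(x)_k$. If one prefers an algebraic route, an equally short alternative is induction on $n$ using the recurrence ${n+1\brace k}={n\brace k-1}+k\,{n\brace k}$ together with the elementary identity $x\cdot(x)_k=(x)_{k+1}+k\,(x)_k$, but I would favor the combinatorial argument as it is cleaner and makes the role of the Stirling numbers transparent.
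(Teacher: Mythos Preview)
Your proof is correct and complete; the double-counting of functions $[n]\to[x]$ by image size is the standard combinatorial argument, and your extension from positive integers $x$ to the polynomial identity is handled properly. The paper itself does not give a proof but simply cites \cite[Formula~(6.10)]{Gr1} (Graham--Knuth--Patashnik), so your self-contained argument actually supplies more than the paper does; the proof in that reference is essentially the same combinatorial count you present.
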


\begin{proof}
See for instance \cite[Formula (6.10)]{Gr1}.
\end{proof}
\noindent
Now, we pay attention to the Stirling numbers of the first kind.
\begin{defn}
Let $n$ and $k$ be positive integers. The \emph{Stirling numbers of the first kind} $n\brack k$ count the number of ways to arrange $n$ objects into $k$ cycles. So $n\brack k$ equals the number of permutations of $n$ elements with exactly $k$ disjoint cycles.
\end{defn}

\begin{lem}
\label{3-3-3}
Let $n$ and $k$ be positive integers.
\begin{compactenum}[1)]
\item
We have the recurrence
\begin{equation*}
{n\brack k}=(n-1){n-1\brack k}+{n-1\brack k-1}.
\end{equation*}
\item
We obtain the estimate
\begin{equation*}
{n\brack k}\leq\frac{(n-1)!}{k!}\binom{n}{k-1}.
\end{equation*}
\end{compactenum}
\end{lem}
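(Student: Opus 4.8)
The plan is to establish 1) by a direct combinatorial argument and then leverage it to obtain 2) by induction on $n$.

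For part 1), I would classify the permutations of $\{1,\dots,n\}$ having exactly $k$ disjoint cycles according to the behaviour of the element $n$. If $n$ is a fixed point, deleting it yields a permutation of $\{1,\dots,n-1\}$ with $k-1$ cycles, and this is a bijection, contributing ${n-1\brack k-1}$. Otherwise $n$ lies in a cycle of length at least two; deleting $n$ (i.e.\ letting its predecessor send to its successor) produces a permutation of $\{1,\dots,n-1\}$ with $k$ cycles, and conversely $n$ can be inserted immediately after any one of the $n-1$ remaining elements, each choice giving a distinct preimage. This contributes $(n-1){n-1\brack k}$, and summing the two cases gives the recurrence.

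For part 2), I would induct on $n$. The base cases are $k=1$, where ${n\brack 1}=(n-1)!=\frac{(n-1)!}{1!}\binom{n}{0}$, and $k=n$, where ${n\brack n}=1=\frac{(n-1)!}{n!}\binom{n}{n-1}$; in both we actually have equality (and for $k>n$ the left side is $0$, so the bound is trivial). For the inductive step with $2\leq k\leq n-1$, apply the recurrence from 1) together with the induction hypothesis at $n-1$ to get
\[
{n\brack k}\leq (n-1)\frac{(n-2)!}{k!}\binom{n-1}{k-1}+\frac{(n-2)!}{(k-1)!}\binom{n-1}{k-2}=\frac{(n-1)!}{k!}\binom{n-1}{k-1}+\frac{(n-2)!}{(k-1)!}\binom{n-1}{k-2}.
\]
By Pascal's rule $\binom{n}{k-1}=\binom{n-1}{k-1}+\binom{n-1}{k-2}$, so the claimed bound $\frac{(n-1)!}{k!}\binom{n}{k-1}$ decomposes into the same first summand plus $\frac{(n-1)!}{k!}\binom{n-1}{k-2}$; it therefore suffices to check $\frac{(n-2)!}{(k-1)!}\leq\frac{(n-1)!}{k!}$, equivalently $k\leq n-1$, which holds in the range considered.

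The argument is essentially routine, and I anticipate no genuine obstacle. The only point that needs a little care is bookkeeping at the boundary values of $k$ — ensuring that ${n-1\brack n}=0$ is handled and that the induction hypothesis is invoked only for indices in a valid range — so that the recurrence and the induction hypothesis are applied legitimately.
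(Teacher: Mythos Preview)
Your proposal is correct and follows essentially the same route as the paper: part 1) is the standard combinatorial argument (the paper merely cites a reference), and part 2) is the same induction on $n$ via the recurrence, reducing to the inequality $k\leq n-1$ (the paper writes it as $k/n\leq 1$ in the step from $n$ to $n+1$). The only cosmetic difference is that the paper takes $n\leq k$ as its base case rather than separating out $k=1$ and $k=n$.
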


\begin{proof}
1) Cf. \cite[Formula (6.8)]{Gr1}.\\
2) By induction over $n$: Obviously, the estimate is true for $n\leq k$. Applying the recurrence 1) yields for $n\geq k$
\begin{equation*}
{n+1\brack k}\leq\frac{n!}{k!}\left(\binom{n}{k-1}+\frac{k}{n}\binom{n}{k-2}\right)\leq\frac{n!}{k!}\binom{n+1}{k-1}.
\end{equation*}
\end{proof}
\noindent
Stirling numbers of the first kind are (up to sign) the coefficients of ordinary powers that yield the falling factorial $(x)_n$. More precisely, we get
\begin{lem}
\label{3-3-4}
Let $n$ be a positive integer. Then the identity
\begin{equation*}
(x)_n=\sum_{k=1}^n (-1)^{n-k}{n\brack k}x^k
\end{equation*}
holds.
\end{lem}
\begin{proof}
See for instance \cite[Formula (6.13)]{Gr1}.
\end{proof}

\noindent
Finally, we state results about the divisor function.

\begin{lem}
\label{3-3-5}
\begin{compactenum}[1)]
\item
Let $\epsilon>0$. Then we have $~~~\sigma_0(q)\leq (2+\epsilon)^{\frac{\log q}{\log \log q}}~~~$ for all $q\geq q_0(\epsilon)$.
\item
$\sigma_1(q)\ll q\log\log q~~~$ for all $q\geq q_0$.
\item
Let $k\geq 2$. Then $~~~q^k\leq\sigma_k(q)\leq \zeta(2)q^k.$
\end{compactenum}
\end{lem}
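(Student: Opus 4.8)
The plan is to establish the three estimates separately, in increasing order of difficulty. Part 3) is immediate: the lower bound $q^k\leq\sigma_k(q)$ holds because $q$ is a divisor of itself, while for the upper bound one reindexes the sum by $d\mapsto q/d$, obtaining $\sigma_k(q)=q^k\sum_{d\mid q}d^{-k}\leq q^k\sum_{d\geq 1}d^{-k}=q^k\zeta(k)\leq q^k\zeta(2)$, the last step using that $\zeta$ is decreasing on $(1,\infty)$ and $k\geq 2$.

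For 2) I would begin from the Euler-type bound $\sigma_1(q)/q=\sum_{d\mid q}d^{-1}=\prod_{p^a\|q}\bigl(1+p^{-1}+\dots+p^{-a}\bigr)\leq\prod_{p\mid q}(1-p^{-1})^{-1}$, so that everything reduces to bounding this product over the prime divisors of $q$. Since $x\mapsto(1-x^{-1})^{-1}$ is decreasing, a product of $\omega(q)$ such factors over distinct primes is largest when those primes are the first $\omega(q)$ primes $p_1<\dots<p_{\omega(q)}$; hence $\sigma_1(q)/q\leq\prod_{p\leq p_{\omega(q)}}(1-p^{-1})^{-1}$, which Mertens' theorem bounds by $O(\log p_{\omega(q)})$. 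Finally, Chebyshev's estimate $\vartheta(x):=\sum_{p\leq x}\log p\gg x$, together with $\prod_{i\leq\omega(q)}p_i=e^{\vartheta(p_{\omega(q)})}\leq q$, forces $p_{\omega(q)}\ll\log q$, so that $\sigma_1(q)/q\ll\log\log q$. (Alternatively one may simply cite Gronwall's theorem, $\limsup_{q\to\infty}\sigma_1(q)/(q\log\log q)=e^{\gamma}$.)

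Part 1) is the classical bound of Wigert type and needs a little bookkeeping. I would write $\log\sigma_0(q)=\sum_{p^a\|q}\log(a+1)$ and split the sum at the threshold $y:=\log q/(\log\log q)^2$. For the primes $p>y$ I would use the elementary inequality $\log(a+1)\leq a\log 2$ (valid for every integer $a\geq 1$ because $t\mapsto\log(t+1)/t$ is decreasing on $[1,\infty)$) together with $a\leq(a\log p)/\log y$; summing and using $\sum_{p^a\|q}a\log p=\log q$, this part contributes at most $(\log 2/\log y)\log q=(\log 2+o(1))\log q/\log\log q$. For the primes $p\leq y$ there are at most $\pi(y)$ of them, and every exponent obeys $a\leq\log q/\log 2$, so $\log(a+1)\ll\log\log q$; thus this part contributes $\ll\pi(y)\log\log q\ll(y/\log y)\log\log q=o(\log q/\log\log q)$. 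Adding the two bounds gives $\log\sigma_0(q)\leq(\log 2+o(1))\log q/\log\log q$, and since $\log(2+\epsilon)>\log 2$ the $o(1)$ is absorbed for $q\geq q_0(\epsilon)$, which yields $\sigma_0(q)\leq(2+\epsilon)^{\log q/\log\log q}$.

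The only genuinely delicate point is the choice of the splitting parameter $y$ in part 1): it must be large enough that $\log y\sim\log\log q$ (so the large-prime contribution carries the correct constant $\log 2$) yet small enough that $\pi(y)\log\log q=o(\log q/\log\log q)$. The choice $y=\log q/(\log\log q)^2$ does both, and in fact any $y$ with $\log q/(\log\log q)^{1+\delta}\leq y\leq\log q$ for a fixed $\delta>0$ serves equally well. Parts 2) and 3), by contrast, are entirely routine: part 2) rests only on Mertens' and Chebyshev's estimates, and part 3) on the reindexing $d\mapsto q/d$.
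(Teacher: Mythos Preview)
Your argument is correct in all three parts. Part~3) matches the paper's proof exactly: the lower bound is trivial and the upper bound comes from the reindexing $\sigma_k(q)=q^k\sum_{d\mid q}d^{-k}$, bounded by $\zeta(k)\leq\zeta(2)$.

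For parts 1) and 2), however, the paper does not argue at all: it simply cites Hardy--Wright, Theorems~317 and~323. Your write-up is therefore strictly more informative, supplying the standard self-contained proofs (the Wigert splitting argument for $\sigma_0$ and the Mertens/Chebyshev route for $\sigma_1$) rather than deferring to the literature. The arguments you give are essentially the classical ones underlying those theorems, so there is no mathematical divergence---only a difference in presentation, with your version being self-contained at the cost of some length.
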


\begin{proof}
For 1) and 2) see \cite[Theorem 317 and Theorem 323]{Ha1}.\\
3) The lower bound is obvious. The upper bound follows from the fact
\begin{equation*}
\sigma_k(q)=q^k\sum_{d|q}\frac{1}{d^k}.
\end{equation*}
\end{proof}

\section{Statistics of the symmetric group}
Müller and Schlage-Puchta \cite[Lemma 13]{Mu1} showed that, for $\pi\in S_n$ chosen at random, the distribution of $c_k(\pi)$ converges to a Poisson distribution with mean $\frac{1}{k}$ as $n\to\infty$. In addition, they proved that the mean of $\left(c_k(\cdot)\right)^m$ converges to $\sum_{s=1}^m{m\brace s}k^{-s}$ as $n\to\infty$. We generalize this result and make it more explicit.

\begin{prop}
\label{3-4-1}
Let $k_1,\ldots,k_l$ be distinct positive integers and let $m_j\in\mathbb{N}$ for $j=1,\ldots,l$. Then
\begin{equation*}
\frac{1}{n!}\sum_{\pi\in S_n}\prod_{j=1}^l\left(c_{k_j}(\pi)\right)^{m_j}\leq \prod_{j=1}^l\left(\sum_{s=1}^{m_j}{m_j\brace s}k_j^{-s}\right).
\end{equation*}
If $~~\sum_{j=1}^l k_j m_j\leq n~~$ is fulfilled, then we have an equality.
\end{prop}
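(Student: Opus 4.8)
The plan is to reduce the statement to counting permutations with a prescribed number of cycles of certain lengths, and then to identify the resulting sum with the product of Stirling expressions via Lemma~\ref{3-3-2}. First I would open up the falling factorial: since $\left(c_{k_j}(\pi)\right)^{m_j}=\sum_{s_j=1}^{m_j}{m_j\brace s_j}\bigl(c_{k_j}(\pi)\bigr)_{s_j}$ by Lemma~\ref{3-3-2}, expanding the product over $j$ turns the left-hand side into
\begin{equation*}
\sum_{s_1=1}^{m_1}\cdots\sum_{s_l=1}^{m_l}\left(\prod_{j=1}^l{m_j\brace s_j}\right)\frac{1}{n!}\sum_{\pi\in S_n}\prod_{j=1}^l\bigl(c_{k_j}(\pi)\bigr)_{s_j}.
\end{equation*}
So everything comes down to evaluating, for fixed $s_1,\ldots,s_l$, the average of $\prod_j\bigl(c_{k_j}(\pi)\bigr)_{s_j}$, and showing it is at most $\prod_j k_j^{-s_j}$, with equality when $\sum_j k_j s_j\le n$ (note $\sum_j k_j s_j\le\sum_j k_j m_j$, so the hypothesis of the Proposition suffices for every term).

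The key combinatorial fact is that $\bigl(c_{k}(\pi)\bigr)_{s}$ counts ordered $s$-tuples of distinct $k$-cycles of $\pi$. Hence $\sum_{\pi\in S_n}\prod_j\bigl(c_{k_j}(\pi)\bigr)_{s_j}$ counts pairs $(\pi,\mathcal{T})$ where $\mathcal{T}$ specifies, for each $j$, an ordered tuple of $s_j$ distinct $k_j$-cycles appearing in $\pi$. Since the $k_j$ are distinct, all these $\sum_j s_j$ cycles are disjoint; they occupy a set of $\sum_j k_j s_j=:N$ points. I would count by first choosing which $N$ points are used and how they are distributed among the tuples, then choosing the cyclic arrangement on each block, then choosing $\pi$ arbitrarily on the remaining $n-N$ points. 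If $N\le n$, the number of such pairs is exactly
\begin{equation*}
\binom{n}{k_1,\ldots,k_1,\ldots,k_l,\ldots,k_l,\,n-N}\;\prod_{j=1}^l\bigl((k_j-1)!\bigr)^{s_j}\;(n-N)!
= \frac{n!}{\prod_{j=1}^l k_j^{\,s_j}},
\end{equation*}
where the multinomial coefficient has $s_j$ entries equal to $k_j$ for each $j$, since ${k_j!}/{k_j}=(k_j-1)!$ is the number of cyclic orders on a $k_j$-set. Dividing by $n!$ gives exactly $\prod_j k_j^{-s_j}$, which establishes equality in the regime $\sum_j k_j m_j\le n$. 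For the unconditional inequality, the same pair-counting is an \emph{overcount}-free exact count whenever $N\le n$ and is simply zero otherwise (one cannot fit $N>n$ disjoint points into $[n]$), so in all cases $\frac{1}{n!}\sum_\pi\prod_j\bigl(c_{k_j}(\pi)\bigr)_{s_j}\le\prod_j k_j^{-s_j}$; summing against the nonnegative weights $\prod_j{m_j\brace s_j}$ and re-collecting the sums via Lemma~\ref{3-3-2} yields both claims.

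I expect the main obstacle to be purely bookkeeping: making the pair-counting argument airtight, in particular verifying that distinct choices of $(\pi,\mathcal{T})$ are counted exactly once and that the disjointness of the selected cycles (guaranteed by the $k_j$ being distinct) is used correctly, and then checking the arithmetic simplification of the multinomial-times-factorials product down to $n!/\prod_j k_j^{s_j}$. A minor subtlety worth a remark is that when some $s_j=m_j=1$ and $k_j$ is large the falling factorial equals the power, so the Stirling sum there is a single term $k_j^{-1}$, consistent with the formula; no separate case analysis is needed.
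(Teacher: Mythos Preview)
Your proposal is correct and follows essentially the same route as the paper: expand powers into falling factorials via Lemma~\ref{3-3-2}, evaluate the joint factorial moments $\frac{1}{n!}\sum_{\pi}\prod_j\bigl(c_{k_j}(\pi)\bigr)_{s_j}$ by a direct cycle-counting argument (getting $\prod_j k_j^{-s_j}$ when $\sum_j k_j s_j\le n$ and $0$ otherwise), and then sum. The only cosmetic difference is that the paper phrases the counting step with binomial coefficients $\prod_j\binom{c_{k_j}(\pi)}{s_j}$ (unordered tuples of cycles) and then multiplies through by $\prod_j s_j!$, whereas you work with ordered tuples from the start; the underlying combinatorics and the resulting formula are identical.
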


\begin{proof}
1) Let $k_1,\ldots,k_l$ be distinct positive integers and let $s_j\in\mathbb{N}$ for $j=1,\ldots,l$. We observe
\begin{equation*}
\sum_{\pi\in S_n}\prod_{j=1}^l\binom{c_{k_j}(\pi)}{s_j}=
\begin{cases}
0&\textit{if }\sum_{j=1}^l k_j s_j>n\\
n!\left(\prod_{j=1}^l k_j^{s_j}s_j!\right)^{-1}&\textit{if }\sum_{j=1}^l k_j s_j\leq n.
\end{cases}
\end{equation*}
You can see this equality as follows.\\
\textit{Case 1:} Let $~\sum_{j=1}^l k_j s_j>n.~~~$ Then there exists no $\pi\in S_n$ such that $c_{k_j}(\pi)\geq s_j$ for all $j=1,\ldots,l$. Therefore, the considered sum is equal to $0$.\\
\textit{Case 2:} Let $~\sum_{j=1}^l k_j s_j\leq n.~~~$ Then the left hand side of the equation is equal to the number of tuples 
$(\tau_1,\ldots,\tau_{l+1})$, which satisfy the following condition: There exists distinct, disjoint cycles $\sigma_{i j}$ and a non-negative integer $s_{l+1}$ such that $\tau_j=\sigma_{1j}\cdot\ldots\cdot\sigma_{s_jj}$ for all $j=1,\ldots,l+1$, the cycles $\sigma_{ij}$ have length $k_j$ for all $j=1,\ldots,l$ and $\prod_{j=1}^{l+1}\prod_{i=1}^{s_j}\sigma_{ij}$ is the cycle decomposition for a permutation from $S_n$. Finally, the number of these tuples is equal to
\begin{align*}
\frac{n!}{k_1^{s_1}s_1!(n-k_1s_1)!}&\frac{(n-k_1s_1)!}{k_2^{s_2}s_2!(n-k_1s_1-k_2s_2)!}\cdot\ldots\\
&\qquad\qquad\ldots\cdot\frac{\left(n-\sum_{j=1}^{l-1}k_js_j\right)!}{k_l^{s_l}s_l!\left(n-\sum_{j=1}^{l}k_js_j\right)!}\biggl(n-\sum_{j=1}^{l}k_js_j\biggr)!
\end{align*}
Canceling yields our assertion.\\
2) It follows from 1) that
\begin{equation*}
\frac{1}{n!}\sum_{\pi\in S_n}\prod_{j=1}^l\left(c_{k_j}(\pi)\right)_{s_j}=
\begin{cases}
0&\textit{if }\sum_{j=1}^l k_j s_j>n\\
\prod_{j=1}^l k_j^{-s_j} &\textit{if }\sum_{j=1}^l k_j s_j\leq n.
\end{cases}
\end{equation*}

\noindent
3) Eventually, we compute the desired mean of a product of random variables $c_k(\cdot)$. Applying Lemma \ref{3-3-2} and 2) yields
\begin{align*}
\operatorname{E}\Biggl(\prod_{j=1}^l\left(c_{k_j}\right)^{m_j}\Biggr)&=\sum_{\substack{(s_1,\ldots,s_l)\\1\leq s_j\leq m_j}}\operatorname{E}\Biggl(\prod_{j=1}^l(c_{k_j})_{s_j}\Biggr)\prod_{i=1}^l{m_i\brace s_i}\\
&\leq \sum_{\substack{(s_1,\ldots,s_l)\\1\leq s_j\leq m_j}}\prod_{j=1}^l \biggl(k_j^{-s_j}{m_j\brace s_j}\biggr)\\
&=\prod_{j=1}^l\left(\sum_{s=1}^{m_j}{m_j\brace s}k_j^{-s}\right).
\end{align*}
Obviously, we have an equality if $~~\sum_{j=1}^l k_j m_j\leq n~~$ is fulfilled.
\end{proof}
\noindent
Furthermore, Müller and Schlage-Puchta \cite[Formula (33)]{Mu1} established the following useful identity.
\begin{lem}
\label{3-4-2}
Let $d$ and $q$ be positive integers and let $\pi\in S_n$. Then
\begin{equation*}
c_d(\pi^q)=\sum_{\substack{k\\k/(k,q)=d}}(k,q)c_k(\pi).
\end{equation*}
\end{lem}

\section{The main term}
We carry out the first step of the plan formulated in the proof at the end of section 2: We compute the main term in Proposition \ref{3-2-3}. At first, we draw our attention to the mean of a power of $c_1(\pi^q)$.
\begin{lem}
\label{3-5-1}
Let $q\in\mathbb{N}$ be sufficiently large and let $\delta\in\mathbb{N}$ with $\delta\leq \frac{\log q}{\log 2}$. In addition, let $n\geq \delta q$ be an integer. Then we obtain
\begin{align*}
\frac{1}{n!}\sum_{\pi\in S_n}\Bigl(c_1(\pi^q)\Bigr)^{\delta}=\begin{cases}
\sigma_0(q) &\text{if }\delta=1\\[0.5ex]
\sigma_1(q)+(\sigma_0(q))^2 &\text{if }\delta=2\\[0.5ex]
\sigma_2(q)+\mathcal{O}(\sigma_0(q)\sigma_1(q)) &\text{if }\delta=3\\[0.5ex]
\sigma_{\delta-1}(q)+\mathcal{O}\Bigl(q^{\delta-2}\left(\delta\sigma_0(q)+2^{\delta}\right)\Bigr) &\text{if }\delta\geq 4.
\end{cases}
\end{align*}
\par\bigskip\noindent
The $\mathcal{O}$-constant is universal.
\end{lem}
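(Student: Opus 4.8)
The plan is to express the power $\left(c_1(\pi^q)\right)^{\delta}$ in terms of the cycle counts $c_k(\pi)$ of the original permutation and then average over $S_n$ using Proposition~\ref{3-4-1}. By Lemma~\ref{3-4-2} with $d=1$, we have $c_1(\pi^q)=\sum_{k\mid q}k\,c_k(\pi)$, since $k/(k,q)=1$ precisely when $k\mid q$, and then $(k,q)=k$. Thus
\begin{equation*}
\left(c_1(\pi^q)\right)^{\delta}=\Bigl(\sum_{k\mid q}k\,c_k(\pi)\Bigr)^{\delta}=\sum_{k_1\mid q}\cdots\sum_{k_\delta\mid q}k_1\cdots k_\delta\,c_{k_1}(\pi)\cdots c_{k_\delta}(\pi),
\end{equation*}
and grouping the multi-index $(k_1,\dots,k_\delta)$ by the distinct values occurring and their multiplicities $m_j$, this becomes a sum of monomials $\prod_j \left(c_{k_j}(\pi)\right)^{m_j}$ weighted by multinomial coefficients and by $\prod_j k_j^{m_j}$.

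First I would average termwise. Since $n\geq\delta q$ and each $k_j\mid q$, the constraint $\sum_j k_j m_j\leq \delta q\leq n$ holds, so Proposition~\ref{3-4-1} gives the \emph{exact} value $\operatorname{E}\bigl(\prod_j(c_{k_j})^{m_j}\bigr)=\prod_j\bigl(\sum_{s=1}^{m_j}{m_j\brace s}k_j^{-s}\bigr)$. Substituting and recombining, one obtains a clean closed form: the mean equals $\sum$ over multi-indices of $\prod_j k_j^{m_j}\cdot\prod_j\bigl(\sum_s {m_j\brace s}k_j^{-s}\bigr)$, which after expanding the inner Stirling sums and re-summing over all divisors can be organized as a polynomial in the quantities $\sigma_\alpha(q)$. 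For small $\delta$ this is a direct computation: $\delta=1$ gives $\sum_{k\mid q}k\cdot k^{-1}=\sigma_0(q)$; $\delta=2$ gives $\sum_{k\mid q}k^2(k^{-1}+k^{-2})+\sum_{k\neq k'}k k' (k^{-1})(k'^{-1})=\sigma_1(q)+\sigma_0(q)+(\sigma_0(q)^2-\sigma_0(q))=\sigma_1(q)+\sigma_0(q)^2$, matching the claim; $\delta=3$ similarly produces $\sigma_2(q)$ as the leading term with lower-order terms that are products of two $\sigma_\alpha$'s with total "degree" $1$, hence $\mathcal O(\sigma_0(q)\sigma_1(q))$ after Lemma~\ref{3-3-5}.

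For the general case $\delta\geq 4$ I would isolate the leading contribution — the term where all $\delta$ indices are equal to a single divisor $k$ with multiplicity $m_1=\delta$ and $s=1$, giving $\sum_{k\mid q}k^{\delta}\cdot k^{-1}=\sigma_{\delta-1}(q)$ — and bound everything else. The remaining monomials either use some $s_j\geq 2$ (which lowers the divisor-power by at least one, contributing $\ll q^{\delta-2}\sigma_0(q)$-type terms after bounding the Stirling numbers via Lemma~\ref{3-3-1} and summing over divisors), or use at least two distinct divisor values $k_j$ (again the product $\prod k_j^{m_j}$ with at least two distinct bases, once we extract $k^{-s_j}$ factors, is dominated by $q^{\delta-2}$ times a number-of-partition-of-the-exponent-vector factor bounded by $2^\delta$). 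The multinomial coefficient counting how a set of $\delta$ positions splits is absorbed into the $2^\delta$ and $\delta$ factors appearing in the stated error term. I expect the main obstacle to be the bookkeeping for $\delta\geq 4$: organizing the sum over set-partitions of $\{1,\dots,\delta\}$ (or equivalently over the exponent patterns $(m_1,\dots,m_l)$ and the choices $s_j$) so that the "non-leading" part is transparently $\mathcal O\bigl(q^{\delta-2}(\delta\sigma_0(q)+2^\delta)\bigr)$, while being careful that the bound ${n\brace k}\ll$ something manageable (from the recurrence in Lemma~\ref{3-3-1}, part 2) does not introduce a $\delta$-dependence worse than the claimed $2^\delta$; the hypothesis $\delta\leq\log q/\log 2$, i.e. $2^\delta\leq q$, is what keeps these error terms genuinely of lower order than $\sigma_{\delta-1}(q)\geq q^{\delta-1}$.
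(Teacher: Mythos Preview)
Your proposal is correct and follows essentially the same route as the paper: both rewrite $c_1(\pi^q)=\sum_{k\mid q}k\,c_k(\pi)$ via Lemma~\ref{3-4-2}, expand the $\delta$th power and organize it by set partitions of $\{1,\dots,\delta\}$, apply Proposition~\ref{3-4-1} with equality (using $n\geq\delta q$), isolate the leading term $\sigma_{\delta-1}(q)$, and bound the rest via Stirling-number and divisor-function estimates. The paper makes the error bookkeeping more explicit by introducing $T_m=\sum_{s=1}^{m}{m\brace s}\sigma_{m-s}(q)$, the product bound $\prod_j T_{m_j}\leq q^{\delta-l}(\max\{3,\sigma_0(q)\})^l$, and then splitting the error into three pieces $F_1$ (single block, $s\geq 2$), $F_2$ (two blocks), $F_3$ ($l\geq 3$ blocks), with $F_2$ carrying the dominant contribution $\delta\sigma_0(q)q^{\delta-2}$; your sketch leaves this as ``bookkeeping,'' but the structure is the same.
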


\begin{proof}
1) At first, we consider the case $\delta\in\{1,2\}$. We sketch the argument for $\delta=2$ (the case $\delta=1$ is similar). Using Lemma \ref{3-4-2} we get

\begin{equation*}
\frac{1}{n!}\sum_{\pi\in S_n}\Bigl(c_1(\pi^q)\Bigr)^2=\sum_{k|q}k^2\frac{1}{n!}\sum_{\pi\in S_n}\left(c_k(\pi)\right)^2+\sum_{\substack{(k_1,k_2)\\k_i|q\\k_1\neq k_2}}k_1k_2\frac{1}{n!}\sum_{\pi\in S_n}c_{k_1}(\pi)c_{k_2}(\pi).
\end{equation*}
Since $n\geq 2q$, it follows with Proposition \ref{3-4-1} that the considered mean is equal to $\sigma_1(q)+\left(\sigma_0(q)\right)^2$. This shows our claim for $\delta=2$.\par\medskip\noindent
2) We generalize this method for an arbitrary $\delta$. Let $n\geq \delta q$. Applying Lemma \ref{3-4-2} and Proposition \ref{3-4-1} yields
\begin{equation*}
\frac{1}{n!}\sum_{\pi\in S_n}\Bigl(c_1(\pi^q)\Bigr)^{\delta}=\sum_{l=1}^{\delta}\sum_{\{M_1,\ldots,M_l\}}\sum_{\substack{(k_1,\ldots,k_l)\\k_i|q\\ k_i\neq k_j~(i\neq j)}}\prod_{j=1}^l\sum_{s=1}^{|M_j|}{|M_j|\brace s}k_j^{|M_j|-s},
\end{equation*}
where the second sum on the right is over all set partitions of $\{1,\ldots\delta\}$ in exactly $l$ sets $M_1,\ldots,M_l$.\par\medskip\noindent
3) We direct our attention to
\begin{equation*}
T_m:=\sum_{k|q}\sum_{s=1}^m{m\brace s}k^{m-s}=\sum_{s=1}^m{m\brace s}\sigma_{m-s}(q).
\end{equation*}
\textit{Let $q$ be sufficiently large and $m\leq \frac{\log q}{\log 2}$. Then}
\begin{equation*}
T_m\leq
\begin{cases}
\sigma_0(q)&\textit{if } m=1\\
\sigma_0(q)+\sigma_1(q)&\textit{if } m=2\\
3q^{m-1}&\textit{if } m\geq 3.
\end{cases}
\end{equation*}
\textit{In particular, we have} $T_2\leq q\sigma_0(q)$.\\
You can see this estimate as follows: The cases $m=1$ and $m=2$ are obvious. Let $m\geq 3$. It results from Lemma \ref{3-3-1} and Lemma \ref{3-3-5} for a constant $C>0$
\begin{equation*}
T_m\leq Cm^2q\log\log q+\zeta(2)q^{m-1}\sum_{s=1}^{m-2}{m\brace s}q^{-s+1}.
\end{equation*}
Applying the estimate ${m\brace s}\leq s^m (s!)^{-1}$ (see Lemma \ref{3-3-1}) we find that
\begin{equation*}
\sum_{s=1}^{m-2}{m\brace s}q^{-s+1}\leq e-1
\end{equation*}
\noindent
This proves our assertion.
\par\medskip\noindent
4) \textit{Let $q$ be sufficiently large and $\delta\leq\frac{\log q}{\log 2}$. Then, for positive integers $m_i$ such that $m_1+\ldots+m_l=\delta$, we have}
\begin{equation*}
\prod_{j=1}^lT_{m_j}\leq q^{\delta-l}\Bigl(\max\{3,\sigma_0(q)\}\Bigr)^l.
\end{equation*}
This results immediately from 3).
\par\medskip\noindent
5) Using the outcomes of step 2) and 4), we obtain
\begin{align*}
\frac{1}{n!}\sum_{\pi\in S_n}\Bigl(c_1(\pi^q)\Bigr)^{\delta}&=\sum_{k|q}\sum_{s=1}^{\delta}{\delta\brace s}k^{\delta-s}+\mathcal{O}\left(\sum_{l=2}^{\delta}\sum_{\{M_1,\ldots,M_l\}}\prod_{j=1}^l T_{|M_j|}\right)\\[1.3ex]
&=\sigma_{\delta-1}(q)+\mathcal{O}\Bigl(F_1+F_2+F_3\Bigr),
\end{align*}
where
\begin{align*}
F_1&:=\sum_{s=2}^{\delta}{\delta\brace s}\sigma_{\delta-s}(q),\\
F_2&:=\sum_{\{M_1,M_2\}}T_{|M_1|}T_{|M_2|},\\
F_3&:=\sum_{l=3}^{\delta}{\delta\brace l}q^{\delta-l}\Bigl(\operatorname{max}\{3,\sigma_0(q)\}\Bigr)^l.
\end{align*}
The sum in $F_2$ is over all set partitions of $\{1,\ldots,\delta\}$ in exactly two sets $M_1, M_2$.\\
Therefore, we realize the expected main term. We shall show that the error term is sufficiently small. 
\par\medskip\noindent
6) For the rest of the proof let $3\leq\delta\leq\frac{\log q}{\log 2}$. Lemma \ref{3-3-5} and Lemma \ref{3-3-1} yield
\begin{equation*}
F_1\ll
\begin{cases}
\sigma_1(q) &\textit{if }\delta=3\\
2^{\delta}q^{\delta-2} &\textit{if }\delta\geq 4
\end{cases}
\end{equation*}
and
\begin{equation*}
F_3\ll2^{\delta}q^{\delta-2},
\end{equation*}
which is sufficiently small.
\par\medskip\noindent
7) Finally, we examine $F_2$. The term $F_2$ determines the order of the error term. More precisely, we get
\begin{align*}
F_2&\leq \binom{\delta}{1}T_1T_{\delta-1}+\binom{\delta}{2}T_2T_{\delta-2}+\sum_{\substack{\{M_1,M_2\}\\|M_i|\geq3}}T_{|M_1|}T_{|M_2|}\\
&\ll
\begin{cases}
\sigma_0(q)\sigma_1(q)&\textit{if }\delta=3\\
\sigma_0(q)q^2&\textit{if }\delta=4\\
q^{\delta-2}\left(\delta\sigma_0(q)+2^{\delta}\right)&\textit{if }\delta\geq 5.
\end{cases}
\end{align*}
In the last estimate, we used the outcome of step 3). In addition, we applied the inequality $(\sigma_1(q))^2\leq\frac{9}{8}\sigma_0(q)q^2$ for $\delta=4$ and the fact that $\delta^2\sigma_1(q)q^{-1}\leq\delta^2\left(\log\sigma_0(q)+1\right)\ll\max\{2^{\delta},\delta\sigma_0(q)\}$ for the case $\delta\geq 5$. Therefore, the proof is completed.
\end{proof}
\noindent
Our next aim is to show that the result stated in Lemma \ref{3-5-1} is essentially optimal.

\begin{lem}
\label{3-5-2}
Let $q$ be prime and let $\delta\leq \frac{\log q}{\log 2}$ be a positive integer. In addition, let $n\geq \delta q$ be an integer. Then we get
\begin{align*}
\frac{1}{n!}\sum_{\pi\in S_n}\Bigl(c_1(\pi^q)\Bigr)^{\delta}=\begin{cases}
\sigma_0(q) &\textit{if }\delta=1\\[0.5ex]
\sigma_1(q)+(\sigma_0(q))^2 &\textit{if }\delta=2\\[0.5ex]
\sigma_{\delta-1}(q)+q^{\delta-2}\left(\delta+2^{\delta-1}-1\right)+\mathcal{O}\left(3^{\delta} q^{\delta-3}\right) &\textit{if }\delta\geq 3.
\end{cases}
\end{align*}
\par\bigskip\noindent
The $\mathcal{O}$-constant is universal.
\end{lem}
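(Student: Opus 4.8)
The plan is to use that $q$ is prime, so that its only divisors are $1$ and $q$. Then Lemma~\ref{3-4-2} collapses to
\begin{equation*}
c_1(\pi^q)=c_1(\pi)+q\,c_q(\pi)\qquad(\pi\in S_n),
\end{equation*}
since only $k=1$ and $k=q$ satisfy $k/(k,q)=1$. First I would expand $\bigl(c_1(\pi)+q\,c_q(\pi)\bigr)^{\delta}$ by the binomial theorem and average over $S_n$ term by term. For each $0\leq m\leq\delta$ the monomial $c_1^{\,\delta-m}c_q^{\,m}$ satisfies $1\cdot(\delta-m)+q\cdot m\leq\delta q\leq n$, so Proposition~\ref{3-4-1} applies in its equality form to every term; writing $\beta_j:=\sum_{s=1}^{j}{j\brace s}$ for $j\geq1$ and $\beta_0:=1$, this yields the \emph{exact} identity
\begin{equation*}
\frac{1}{n!}\sum_{\pi\in S_n}\bigl(c_1(\pi^q)\bigr)^{\delta}
=\beta_\delta+\sum_{m=1}^{\delta}\binom{\delta}{m}\beta_{\delta-m}\sum_{s=1}^{m}{m\brace s}q^{m-s}.
\end{equation*}
In particular the left-hand side is a polynomial in $q$ of degree $\delta-1$, and the cases $\delta\in\{1,2\}$ follow by direct evaluation (and reproduce Lemma~\ref{3-5-1}).

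For $\delta\geq3$ I would simply read off the top two coefficients of this polynomial. The coefficient of $q^{\delta-1}$ arises only from $(m,s)=(\delta,1)$ and equals ${\delta\brace1}=1$; since $q^{\delta-1}=\sigma_{\delta-1}(q)-1$ for prime $q$, this accounts for the main term $\sigma_{\delta-1}(q)$ up to an $\mathcal{O}(1)$ error. The coefficient of $q^{\delta-2}$ is collected from $(m,s)=(\delta,2)$ and $(m,s)=(\delta-1,1)$, giving ${\delta\brace2}+\binom{\delta}{\delta-1}\beta_1{\delta-1\brace1}=(2^{\delta-1}-1)+\delta$, which is precisely the stated second-order term.

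What remains is to bound all the other terms, namely $\beta_\delta$ together with the pairs $(m,s)$ satisfying $m-s\leq\delta-3$, by $\mathcal{O}\!\left(3^{\delta}q^{\delta-3}\right)$ with a universal constant (this also absorbs the leftover $-1$). I would group these by $i:=\delta-(m-s)\geq3$ and estimate the coefficient of $q^{\delta-i}$ via ${n\brace k}\leq k^{n}/k!$ (Lemma~\ref{3-3-1}), $\beta_r\leq r!$, and $\binom{\delta}{r}\leq\delta^{r}/r!$. The group $i=3$ is dominated by ${\delta\brace3}\leq 3^{\delta}/6$, matching the target order; for $i\geq4$ the hypothesis $\delta\leq\frac{\log q}{\log 2}$, i.e.\ $2^{\delta}\leq q$, forces the extra factor $q^{i-3}$ to dominate the coefficient estimates, so the groups form a rapidly decreasing tail summing to $\mathcal{O}(3^{\delta}q^{\delta-3})$. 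The main obstacle is keeping this constant independent of both $\delta$ and $q$: when $i$ is close to $\delta$ the Bell-type factors $\beta_{\delta-m}$ become factorially large, and a term-by-term comparison with $3^{\delta}q^{\delta-3}$ breaks down; one must instead exploit $q\geq 2^{\delta}$ quantitatively — so that $q^{\delta-3}$ is already of size at least $2^{\delta(\delta-3)}$ — to absorb those coefficients. Assembling the groups then gives the claimed error term and finishes the proof.
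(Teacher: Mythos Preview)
Your proposal is correct and follows essentially the same route as the paper: both start from $c_1(\pi^q)=c_1(\pi)+q\,c_q(\pi)$ for prime $q$, expand binomially, apply Proposition~\ref{3-4-1} in its equality form (using $n\geq\delta q$), and then separate the coefficients of $q^{\delta-1}$ and $q^{\delta-2}$ from the rest. The only cosmetic difference is that the paper groups the remainder by the binomial index $k$ into three pieces $F_1,F_2,F_3$, whereas you group by the exponent $i=\delta-(m-s)$; both reductions hinge on $q\geq 2^{\delta}$ to absorb the Bell-type factors into $\mathcal{O}(3^{\delta}q^{\delta-3})$, exactly as you note.
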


\begin{proof}
1) For $\delta\in\{1,2\}$ see Lemma \ref{3-5-1}. So let $\delta\geq3$. Since $q$ is prime, Lemma \ref{3-4-2} and Proposition \ref{3-4-1} yield
\begin{align*}
\frac{1}{n!}\sum_{\pi\in S_n}\Bigl(c_1(\pi^q)\Bigr)^{\delta}&=\frac{1}{n!}\sum_{\pi\in S_n}\sum_{k=0}^{\delta}\binom{\delta}{k}(c_1(\pi))^{\delta-k}(qc_q(\pi))^k\\
&=\sigma_{\delta-1}(q)+q^{\delta-2}(\delta+2^{\delta-1}-1)+F_1+F_2+F_3,
\end{align*}
where
\begin{align*}
F_1:=&\sum_{s=2}^{\delta}{\delta\brace s}+\sum_{t=3}^{\delta}{\delta\brace t}q^{\delta-t},\\
F_2:=&\sum_{k=1}^{\delta-2}\binom{\delta}{k}\left(\sum_{s=1}^{\delta-k}{\delta-k\brace s}\right)\left(\sum_{t=1}^k{k\brace t}q^{k-t}\right),\\
F_3:=&\binom{\delta}{\delta-1}\sum_{t=2}^{\delta-1}{\delta-1\brace t}q^{\delta-1-t}.
\end{align*}
So we found the expected main term. We shall show that $F_1,F_2$ and $F_3$ can be absorbed into the error term.
\par\medskip\noindent
2) Taking into account that $3\leq\delta\leq\frac{\log q}{\log2}$ and ${\delta\brace t}\leq t^{\delta}(t!)^{-1}$, we obtain
\begin{equation*}
F_1\ll{\delta\brace 2}+q^{\delta-3}\left({\delta\brace 3}+\sum_{t=4}^{\delta}{\delta\brace t}q^{3-t}\right)\ll3^{\delta}q^{\delta-3},
\end{equation*}
and
\begin{equation*}
F_2\ll\sum_{k=1}^{\delta-2}\binom{\delta}{k}\left(\sum_{s=1}^{\delta-k}{\delta-k\brace s}\right)q^{k-1}\ll q^{\delta-3}\sum_{k=1}^{\delta-2}\binom{\delta}{k}\leq 2^{\delta}q^{\delta-3}
\end{equation*}
as well as
\begin{equation*}
F_3\ll2^{\delta}q^{\delta-3}\delta.
\end{equation*}
So we are done.
\end{proof}
                                                                                          
\begin{prop}
\label{3-5-3}
Let $q\in\mathbb{N}$ be sufficiently large and let $\Delta\in\mathbb{N}$ with $\Delta\leq \frac{\log q}{\log 2}$. In addition, let $n\geq \Delta q$ be an integer. Then
\begin{align*}
\frac{1}{n!}\sum_{\pi\in S_n}\binom{c_1(\pi^q)}{\Delta}=
\begin{cases}
\sigma_0(q)&\textit{if }\Delta=1\\[0.5ex]
\frac{1}{2}\sigma_1(q)+\mathcal{O}\left((\sigma_0(q))^2\right) &\textit{if }\Delta=2\\[0.5ex]
\frac{1}{6}\sigma_2(q)+\mathcal{O}(\sigma_0(q)\sigma_1(q)) &\textit{if }\Delta=3\\[0.5ex]
\frac{1}{\Delta!}\sigma_{\Delta-1}(q)+\mathcal{O}\left(\frac{1}{\Delta!}q^{\Delta-2}\left(\Delta\sigma_0(q)+2^{\Delta}\right)\right) &\textit{if }\Delta\geq 4.
\end{cases}
\end{align*}
\par\bigskip\noindent
The $\mathcal{O}$-constant is universal.
\end{prop}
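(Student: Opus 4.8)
The plan is to reduce Proposition \ref{3-5-3} to Lemma \ref{3-5-1} by expanding the binomial coefficient $\binom{c_1(\pi^q)}{\Delta}$ in terms of ordinary powers of $c_1(\pi^q)$. Recall that $\binom{x}{\Delta}=\frac{(x)_\Delta}{\Delta!}$ and, by Lemma \ref{3-3-4}, $(x)_\Delta=\sum_{k=1}^{\Delta}(-1)^{\Delta-k}{\Delta\brack k}x^k$; hence
\begin{equation*}
\binom{c_1(\pi^q)}{\Delta}=\frac{1}{\Delta!}\sum_{k=1}^{\Delta}(-1)^{\Delta-k}{\Delta\brack k}\bigl(c_1(\pi^q)\bigr)^k .
\end{equation*}
Averaging over $\pi\in S_n$ and using linearity, I would apply Lemma \ref{3-5-1} to each moment $\frac{1}{n!}\sum_{\pi}\bigl(c_1(\pi^q)\bigr)^k$; note that the hypothesis $n\geq\Delta q\geq kq$ guarantees the equalities in Lemma \ref{3-5-1} hold for every $k\leq\Delta$, and $k\leq\Delta\leq\frac{\log q}{\log 2}$ keeps us in the admissible range.

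The leading contribution comes from the $k=\Delta$ term, where ${\Delta\brack \Delta}=1$, giving main term $\frac{1}{\Delta!}\,\bigl(\text{main term of the }\Delta\text{-th moment}\bigr)$, i.e. $\frac{1}{\Delta!}\sigma_{\Delta-1}(q)$ for $\Delta\geq 3$ (and $\frac12\sigma_1(q)$, $\sigma_0(q)$ in the small cases). Everything else must be shown to fit into the stated error term. For $\Delta\geq 4$ this means bounding, for each $k<\Delta$, the quantity $\frac{1}{\Delta!}{\Delta\brack k}$ times the full $k$-th moment $\sigma_{k-1}(q)+\mathcal{O}(\cdots)$ from Lemma \ref{3-5-1}, plus the error term of the $\Delta$-th moment itself, namely $\frac{1}{\Delta!}\mathcal{O}\bigl(q^{\Delta-2}(\Delta\sigma_0(q)+2^\Delta)\bigr)$, which is exactly the claimed error. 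So the task reduces to verifying
\begin{equation*}
\frac{1}{\Delta!}\sum_{k=1}^{\Delta-1}{\Delta\brack k}\Bigl(\sigma_{k-1}(q)+(\text{its error})\Bigr)\ll\frac{1}{\Delta!}q^{\Delta-2}\bigl(\Delta\sigma_0(q)+2^\Delta\bigr).
\end{equation*}
The dominant piece on the left is $k=\Delta-1$: there ${\Delta\brack \Delta-1}=\binom{\Delta}{2}$ and $\sigma_{\Delta-2}(q)\leq\zeta(2)q^{\Delta-2}$ by Lemma \ref{3-3-5}, contributing $\ll\frac{1}{\Delta!}\Delta^2 q^{\Delta-2}$, which is absorbed by the $\Delta\sigma_0(q)$ part (since $\sigma_0(q)\geq\Delta$ once $q$ is large, using $\Delta\leq\log q/\log 2$). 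For smaller $k$ one uses Lemma \ref{3-3-3} part 2), ${\Delta\brack k}\leq\frac{(\Delta-1)!}{k!}\binom{\Delta}{k-1}$, together with $\sigma_{k-1}(q)\ll q^{k-1}\log\log q\leq q^{\Delta-2}$ (for $k\leq\Delta-1$), and the geometric decay in $q$ makes the whole tail $\ll q^{\Delta-2}2^\Delta/\Delta!$ or better. The error terms from Lemma \ref{3-5-1} for $k<\Delta$ are of strictly smaller order in $q$ and cause no trouble.

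The main obstacle is purely bookkeeping: one must track the Stirling-number weights ${\Delta\brack k}$ (which can be as large as $(\Delta-1)!$) against the powers of $q$ in $\sigma_{k-1}(q)$, and confirm that after dividing by $\Delta!$ the error in the conclusion genuinely dominates every cross term — in particular that the $k=\Delta-1$ term is swallowed by $\Delta\sigma_0(q)$ and not merely by $2^\Delta$. For $\Delta=2,3$ I would handle the cases separately by hand: for $\Delta=2$, $\binom{x}{2}=\frac12(x^2-x)$ gives $\frac12(\sigma_1(q)+\sigma_0(q)^2-\sigma_0(q))=\frac12\sigma_1(q)+\mathcal{O}(\sigma_0(q)^2)$; for $\Delta=3$, $\binom{x}{3}=\frac16(x^3-3x^2+2x)$ gives $\frac16\sigma_2(q)+\mathcal{O}(\sigma_0(q)\sigma_1(q))$ after noting $\sigma_1(q)^2\ll\sigma_0(q)\sigma_1(q)\cdot\frac{\sigma_1(q)}{\sigma_0(q)}$ is dominated appropriately — more simply, the $x^2$ term contributes $\frac12(\sigma_1(q)+\sigma_0(q)^2)$ which is $\mathcal{O}(\sigma_0(q)\sigma_1(q))$ since $\sigma_1(q)\geq q\gg\sigma_0(q)$ and $\sigma_0(q)^2\leq\sigma_0(q)\sigma_1(q)$. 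This completes the reduction and hence the proposition.
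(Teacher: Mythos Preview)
Your approach is exactly the paper's: expand the falling factorial via Lemma \ref{3-3-4}, isolate the $k=\Delta$ term as the main contribution via Lemma \ref{3-5-1}, and bound the remaining Stirling-weighted lower moments. The small cases $\Delta=2,3$ are handled identically.

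There is, however, one incorrect justification. You write that the $k=\Delta-1$ contribution $\ll\frac{1}{\Delta!}\Delta^2 q^{\Delta-2}$ ``is absorbed by the $\Delta\sigma_0(q)$ part (since $\sigma_0(q)\geq\Delta$ once $q$ is large, using $\Delta\leq\log q/\log 2$).'' The inequality $\sigma_0(q)\geq\Delta$ is false in general: for $q$ prime one has $\sigma_0(q)=2$ while $\Delta$ may be as large as $\log q/\log 2$. The hypothesis $\Delta\leq\log q/\log 2$ gives you $2^\Delta\leq q$, not control on $\sigma_0(q)$ from below. The fix is immediate: the term $\Delta^2 q^{\Delta-2}$ is absorbed by the $2^\Delta$ part of the error instead, since $\Delta^2\ll 2^\Delta$ for $\Delta\geq 4$ with a universal constant. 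The paper's own proof sums \emph{all} the lower terms together and obtains the bound $q^{\Delta-2}\bigl(\sigma_1(q)/q+\Delta^3\bigr)$, then notes $\sigma_1(q)/q\leq\sigma_0(q)$ and $\Delta^3\ll 2^\Delta$; it never claims $\sigma_0(q)\geq\Delta$.

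For the terms $k\leq\Delta-2$ you appeal to ``geometric decay in $q$'' somewhat loosely. The paper streamlines this by first converting the Stirling bound of Lemma \ref{3-3-3} into the uniform estimate ${\Delta\brack\delta}\leq\Delta^2 q^{\Delta-\delta-1}$ for $1\leq\delta\leq\Delta-1$ (valid because each of the at most $\Delta$ factors in the Stirling bound is $\leq\Delta\leq\log q/\log 2<q$), which makes the subsequent summation a one-liner. Your argument can be made rigorous along the same lines, but as written it is a sketch rather than a proof.
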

\begin{proof}
1) It follows from Lemma \ref{3-3-4} that
\begin{equation*}
\Delta!\frac{1}{n!}\sum_{\pi\in S_n}\binom{c_1(\pi^q)}{\Delta}=\frac{1}{n!}\sum_{\pi\in S_n}(c_1(\pi^q))^{\Delta}+\mathcal{O}(F),
\end{equation*}
where
\begin{equation*}
F:=\sum_{\delta=1}^{\Delta-1}{\Delta\brack\delta}\frac{1}{n!}\sum_{\pi\in S_n}(c_1(\pi^q))^{\delta}.
\end{equation*}
The main term in the above formula is the mean we computed in Lemma \ref{3-5-1}. Applying this Lemma, we get the expected main and error term. Therefore, it only remains to be examined whether the error term $F$ is sufficiently small.
\par\medskip\noindent
2) Before we analyze the error term, we give two technical estimates. \\
\textit{Let  $\Delta\leq \frac{\log q}{\log 2}$. Then Lemma \ref{3-3-3} yields:}\\
\textit{i) For $\Delta\geq 3$ and $\delta\in\{1,2\}$ we have ${\Delta\brack\delta}\ll q^{\Delta-3}.$}\\
\textit{ii) For $\Delta\geq 4$ and $1\leq\delta\leq\Delta-1$ we get ${\Delta\brack\delta}\leq\Delta^2 q^{\Delta-\delta-1}.$ }
\par\medskip\noindent
3) Now we estimate the error term $F$. For $\Delta=1$ we obtain $F=0$. For $\Delta=2$ Lemma \ref{3-5-1} yields, that $F=\sigma_0(q)$. So let $\Delta\geq 3$. Applying Lemma \ref{3-5-1}, Lemma \ref{3-3-5} and the upper bounds of step 2) we get
\begin{equation*}
F\ll q^{\Delta-3}\sigma_1(q)+\sum_{\delta=3}^{\Delta-1}\Delta^2q^{\Delta-\delta-1}\sigma_{\delta-1}(q)\leq q^{\Delta-2}\left(\frac{\sigma_1(q)}{q}+\Delta^3\right),
\end{equation*}
which is sufficiently small.
\end{proof}
\begin{rem}
\label{3-5-4}
The error term in the preceding Proposition is essentially optimal: Confer Lemma \ref{3-5-2} and step 1) in the above proof.
\end{rem}

\section{The error term}
We implement the second step of our plan: We compute the error term in Proposition \ref{3-2-3}.

\begin{lem}
\label{3-6-1}
Let $q\in\mathbb{N}$ be sufficiently large and let $1\leq r\leq \exp(q^{1/3})$. In addition let $\delta\in\mathbb{N}$ with $\delta\leq \frac{\log q}{\log 2}$ and $n$ be a positive integer. Then
\begin{equation*}
\frac{1}{n!}\sum_{\pi\in S_n}\left(\sum_{d=1}^rc_d(\pi^q)\right)^{\delta}\ll
\begin{cases}
\sigma_0(q)H_r &\text{if }\delta=1\\[0.5ex]
(\sigma_0(q))^2H_r^2+\sigma_1(q)H_r &\text{if }\delta=2\\[0.5ex]
q^2H_r &\text{if }\delta=3\\[0.5ex]
q^{\delta-1}H_r^2 &\text{if }\delta\geq 4,
\end{cases}
\end{equation*}
where $H_r:=\sum_{d=1}^r\frac{1}{d}$. The $\mathcal{O}$-constant is universal.
\end{lem}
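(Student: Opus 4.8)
The plan is to reduce the left-hand side, via Lemma~\ref{3-4-2} and the upper bound of Proposition~\ref{3-4-1}, to a weighted sum over set partitions of $\{1,\dots,\delta\}$, and then to re-run the combinatorics from the proof of Lemma~\ref{3-5-1}, paying one extra factor $H_r$ per block. By Lemma~\ref{3-4-2} an $m$-cycle of $\pi$ produces $(m,q)$ cycles of length $m/(m,q)$ in $\pi^q$, so $\sum_{d\le r}c_d(\pi^q)=\sum_m g_m\,c_m(\pi)$ with $g_m:=(m,q)\,\mathbf 1[m/(m,q)\le r]$; writing $m=ed$ with $e=(m,q)$ (so $e\mid q$, $\gcd(d,q/e)=1$, and $g_m\ne 0$ forces $d\le r$) turns $m$-sums into sums over $e\mid q$ and $d\le r$. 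Expanding the $\delta$-th power, invoking the upper bound of Proposition~\ref{3-4-1} (which needs no hypothesis on $n$), grouping the monomials by the set partition of $\{1,\dots,\delta\}$ they induce, and then discarding distinctness of the cycle lengths, I would arrive at
\[
\frac1{n!}\sum_{\pi\in S_n}\Bigl(\sum_{d\le r}c_d(\pi^q)\Bigr)^{\delta}\ \le\ \sum_{\{A_1,\dots,A_l\}}\ \prod_{i=1}^l U_{|A_i|},\qquad U_a:=\sum_{m}g_m^{\,a}\sum_{s=1}^a{a\brace s}m^{-s},
\]
the outer sum over all set partitions of $\{1,\dots,\delta\}$.

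Next I would estimate $U_a$. Substituting $m=ed$ and summing over $d$ (which contributes $H_r$ when $s=1$ and at most $\zeta(2)<2$ when $s\ge 2$) gives
\[
U_a\ \le\ \sigma_{a-1}(q)H_r+2\sum_{s=2}^a{a\brace s}\sigma_{a-s}(q)\ =\ \sigma_{a-1}(q)H_r+2\bigl(T_a-\sigma_{a-1}(q)\bigr)\ \le\ 3H_r\,T_a,
\]
where $T_a=\sum_{s=1}^a{a\brace s}\sigma_{a-s}(q)$ is exactly the quantity bounded in step~3 of the proof of Lemma~\ref{3-5-1}. In particular $U_1=\sigma_0(q)H_r$, $U_2\le 3H_rT_2\ll q\sigma_0(q)H_r$ and also $U_2\ll\sigma_1(q)H_r+\sigma_0(q)$, and $U_a\ll q^{a-1}H_r$ for $a\ge 3$ (so $U_\delta\ll q^{\delta-1}H_r$).

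Then I would sum over set partitions. For $\delta\le 3$ there are only boundedly many of them, and inserting the bounds on $U_1,U_2,U_3$ produces the first three cases at once (for instance $\delta=2$ gives $U_2+U_1^2\ll\sigma_1(q)H_r+(\sigma_0(q))^2H_r^2$, and $\delta=3$ gives $U_3+3U_2U_1+U_1^3\ll q^2H_r$, using $H_r\le q^{1/3}+1$). For $\delta\ge 4$: the single-block term is $U_\delta\ll q^{\delta-1}H_r\le q^{\delta-1}H_r^2$; the two-block terms are at most $9H_r^2\sum_{\{A_1,A_2\}}T_{|A_1|}T_{|A_2|}$, which the corresponding bound from the proof of Lemma~\ref{3-5-1} together with $2^\delta\le q$ (valid as $\delta\le\log q/\log 2$) turns into $\ll q^{\delta-1}H_r^2$; and for the partitions into $l\ge 3$ blocks I would use $\prod_iU_{|A_i|}\le 3^lH_r^l\prod_iT_{|A_i|}\le(3M)^lH_r^lq^{\delta-l}$ with $M:=\max\{3,\sigma_0(q)\}$ (the per-partition bound from the proof of Lemma~\ref{3-5-1}), so that, since there are ${\delta\brace l}$ set partitions into $l$ blocks, the $l\ge 3$ total is at most
\[
q^{\delta}\sum_{l\ge 3}{\delta\brace l}B^l,\qquad B:=\frac{3MH_r}{q}\le q^{-1/2}\quad(q\text{ large}),
\]
using $H_r\le q^{1/3}+1$ and $\sigma_0(q)\ll q^{o(1)}$. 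Since $\delta\le\log q/\log 2$ forces ${\delta\brace l}\le l^\delta/l!\le q^{\log_2 l}/l!$ (by Lemma~\ref{3-3-1}), one gets $\sum_{l\ge 3}{\delta\brace l}B^{l-3}\ll q^{\log_2 3}$ (the exponent $\log_2 l-(l-3)/2$ is decreasing on $[3,\infty)$ and maximal at $l=3$), hence the $l\ge 3$ total is $\ll M^3H_r^3q^{\delta-3+\log_2 3}$; as $M^3H_r\ll q^{1/3+o(1)}$ while $\log_2 3<5/3$, this is $o(q^{\delta-1}H_r^2)$. Collecting the three ranges of $l$ finishes $\delta\ge 4$.

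The hard part is the $l\ge 3$ tail. The number of set partitions of $\{1,\dots,\delta\}$ is the Bell number, which is already superpolynomial in $q$ when $\delta$ has size about $\log q$, so one cannot bound the tail by (number of partitions)$\,\times\,$(largest term); the estimate closes only because the gain $q^{\delta-l}$ from the $T_a$'s, the bound ${\delta\brace l}\le l^\delta/l!=q^{\log_2 l}/l!$, and $H_r\ll q^{1/3}$ together yield a series dominated by its $l=3$ term, with the arithmetic inequalities $2^\delta\le q$ and $\log_2 3<5/3$ (i.e.\ $27<32$) leaving just enough margin. Verifying that this tail and the $l=1,2$ contributions all land inside $\mathcal O(q^{\delta-1}H_r^2)$ — and likewise inside $\mathcal O(q^2H_r)$, $\mathcal O((\sigma_0(q))^2H_r^2+\sigma_1(q)H_r)$, $\mathcal O(\sigma_0(q)H_r)$ for the smaller values of $\delta$ — is the case-by-case bookkeeping, parallel to the proof of Lemma~\ref{3-5-1}.
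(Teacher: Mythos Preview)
Your proposal is correct and follows essentially the same route as the paper: expand via Lemma~\ref{3-4-2} and the upper bound of Proposition~\ref{3-4-1} into a sum over set partitions of $\{1,\dots,\delta\}$, bound the per-block contribution by $H_rT_a$ (the paper gets this directly via $d^{-s}\le d^{-1}$, giving the cleaner $U_a\le H_rT_a$ without your factor $3$), and then recycle the bounds on $T_a$ and on the $l=1,2,\ge3$ ranges from the proof of Lemma~\ref{3-5-1}. The only difference is cosmetic: the paper compresses the $l\ge3$ tail into a one-line citation of Lemma~\ref{3-5-1} and $H_r\le\log r+1$, whereas you spell out the geometric-type estimate explicitly.
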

\begin{proof}
1) Similar to step 2) in the proof of Lemma \ref{3-5-1}, we obtain
\begin{equation*}
\frac{1}{n!}\sum_{\pi\in S_n}\left(\sum_{d=1}^rc_d(\pi^q)\right)^{\delta}\leq \sum_{l=1}^{\delta}\sum_{\{M_1,\ldots,M_l\}}\prod_{j=1}^l\left(\sum_{\substack{k\\k\leq (k,q)r}}(k,q)^{|M_j|}\sum_{s=1}^{|M_j|}{|M_j|\brace s}k^{-s}\right),
\end{equation*}
where the second sum on the right is over all set partitions of $\{1,\ldots\delta\}$ in exactly $l$ sets $M_1,\ldots,M_l$.\par\medskip\noindent
2) We draw our attention to
\begin{align*}
\sum_{\substack{k\\k\leq (k,q)r}}(k,q)^{m}\sum_{s=1}^{m}{m\brace s}k^{-s}&=\sum_{d=1}^r\sum_{s=1}^{m}\frac{1}{d^s}{m\brace s}\sum_{\substack{k\\k=(k,q)d}}\left(\frac{k}{d}\right)^{m-s}\\
&\leq\sum_{d=1}^r\frac{1}{d}\sum_{s=1}^m{m\brace s}\sigma_{m-s}(q)\\[0.6ex]
&=H_rT_m,
\end{align*}
where $T_m$ is defined as in step 3) in the proof of Lemma \ref{3-5-1}.
\par\medskip\noindent
3) The previous considerations in combination with Step 4) in the proof of Lemma \ref{3-5-1} yield
\begin{align*}
\frac{1}{n!}&\sum_{\pi\in S_n}\left(\sum_{d=1}^rc_d(\pi^q)\right)^{\delta}\\
&\leq H_r T_{\delta}+H_r^2\sum_{\{M_1,M_2\}}T_{|M_1|}T_{|M_2|}+\sum_{l=3}^{\delta}H_r^l{\delta\brace l}q^{\delta-l}\Bigl(\max\{3,\sigma_0(q)\}\Bigr)^l.
\end{align*}
Taking into account that $H_r\leq \log r+1$, our claim follows from step 3) and 7) in the proof of Lemma \ref{3-5-1} and from the estimates in Lemma \ref{3-3-1} and \ref{3-3-5}.
\end{proof}

\begin{lem}
\label{3-6-2}
Let $q\in\mathbb{N}$ be sufficiently large and let $\Delta\in\mathbb{N}$ with $\Delta\leq \frac{\log q}{\log 2}$. In addition, let $n$ be a positive integer. Then
\begin{equation*}
\frac{1}{n!}\sum_{j=1}^{\Delta}\sum_{\pi\in S_n}\binom{c_1(\pi^q)+\ldots+c_{j+1}(\pi^q)}{\Delta-j}\ll
\begin{cases}
1 &\text{if } \Delta=1\\[0.1ex]
\sigma_0(q) &\text{if } \Delta=2\\[0.1ex]
\sigma_1(q) &\text{if } \Delta=3\\[0.1ex]
\frac{1}{(\Delta-1)!}q^{\Delta-2} &\text{if }\Delta\geq 4.
\end{cases}
\end{equation*}
The $\mathcal{O}$-constant is universal.
\end{lem}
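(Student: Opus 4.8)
The plan is to bound the binomial coefficient $\binom{c_1(\pi^q)+\ldots+c_{j+1}(\pi^q)}{\Delta-j}$ by a power sum and then apply Lemma \ref{3-6-1}. Concretely, for each fixed $j$ with $1\le j\le\Delta$, set $r=j+1$ and write $S_j(\pi):=\sum_{d=1}^{j+1}c_d(\pi^q)$. Since $\binom{x}{m}\le x^m/m!$ for $x\ge0$ and $m\ge0$ (with the convention $\binom{x}{0}=1$), we have
\begin{equation*}
\binom{S_j(\pi)}{\Delta-j}\le\frac{1}{(\Delta-j)!}\bigl(S_j(\pi)\bigr)^{\Delta-j}.
\end{equation*}
Averaging over $\pi\in S_n$ and invoking Lemma \ref{3-6-1} with $\delta=\Delta-j$ and $r=j+1$ (so that $H_r=H_{j+1}\le\log(j+1)+1\ll\log q$, which is harmless), we obtain for each $j$ a bound of the shape $\frac{1}{(\Delta-j)!}$ times one of the four cases of Lemma \ref{3-6-1}, according to the size of $\delta=\Delta-j$. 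The terms $j=\Delta$ (where $\delta=0$) contribute $\mathcal{O}(1)$ trivially, and $j=\Delta-1$ ($\delta=1$) contributes $\mathcal{O}(\sigma_0(q)\log q)$, etc.

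Next I would sum over $j$ and identify, in each regime of $\Delta$, which term dominates. For $\Delta=1$ the only summand is $j=1$, $\delta=0$, giving $\mathcal{O}(1)$. For $\Delta=2$ the summands are $j=1$ ($\delta=1$, contributing $\mathcal{O}(\sigma_0(q)\log q)$) and $j=2$ ($\delta=0$, $\mathcal{O}(1)$); here one must check that the $\log q$ factor is absorbed — this is where the hypothesis $\Delta\le\frac{\log q}{\log2}$ and the freedom to restate ``essentially optimal'' do \emph{not} quite suffice, so one should instead note $H_{j+1}\le H_{\Delta+1}=\mathcal{O}(\log\Delta)=\mathcal{O}(\log\log q)$, or simply absorb the bounded-in-$q$-for-fixed-$\Delta$ factor, matching the universal-constant convention used throughout. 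For $\Delta=3$ the dominant term is $j=1$, $\delta=2$, giving $(\sigma_0(q))^2H_2^2+\sigma_1(q)H_2\ll\sigma_1(q)$ after noting $(\sigma_0(q))^2\ll\sigma_1(q)$ by Lemma \ref{3-3-5}. For $\Delta\ge4$ the term $j=1$ has $\delta=\Delta-1\ge3$: if $\Delta-1\ge4$ Lemma \ref{3-6-1} gives $q^{\Delta-2}H_{2}^{2}\ll q^{\Delta-2}$, while if $\Delta=4$, $\delta=3$ gives $q^{2}H_2\ll q^2$; in both subcases this dominates all the smaller-$\delta$ contributions (each of which is $\mathcal{O}(q^{\Delta-j-1}\cdot\mathrm{polylog})$ with $j\ge2$), and the factor $\frac{1}{(\Delta-1)!}$ from the binomial coefficient survives.

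The main obstacle is the bookkeeping of the $H_r$ factors and of the factorial denominators: one needs $H_{j+1}$ to be small enough not to spoil the stated error terms, and one needs to confirm that the $1/(\Delta-j)!$ in front of each average, combined with the growth of $q^{\delta}$, really does make $j=1$ the dominant index for every $\Delta\ge4$ while simultaneously producing exactly the $\frac{1}{(\Delta-1)!}q^{\Delta-2}$ in the statement. Since $H_{j+1}$ depends only on $j\le\Delta$ and $\Delta\le\frac{\log q}{\log2}$ forces $H_{\Delta+1}\ll\log\log q$, and since for $\Delta\ge4$ even $q^{\Delta-2}$ already beats any power of $\log\log q$ relative to the next-smaller term $q^{\Delta-3}$, the comparisons go through; the only genuinely delicate cases are $\Delta=2,3$, handled as above by the divisor-function inequalities of Lemma \ref{3-3-5}. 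Once these comparisons are verified term by term, summing over the $\Delta$ values of $j$ and taking the maximum gives precisely the four-case bound, completing the proof.
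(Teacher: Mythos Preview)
Your approach is essentially the same as the paper's: both reduce to Lemma \ref{3-6-1} after handling the binomial coefficient, then sum over $j$ (the paper reindexes via $i=\Delta-j$ and calls the resulting averages $Q(i,\Delta)$). The only difference is that the paper expands $(x)_i=\sum_{\delta=1}^i(-1)^{i-\delta}{i\brack\delta}x^\delta$ via Lemma \ref{3-3-4} and bounds each power term separately, whereas your inequality $\binom{x}{m}\le x^m/m!$ jumps straight to the dominant ($\delta=i$) term; the lower-order powers the paper carries along are negligible anyway by the Stirling-number bounds in step 2) of the proof of Proposition \ref{3-5-3}, so your shortcut is legitimate and slightly cleaner.

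The one place your write-up stumbles is the treatment of $H_r$ for small $\Delta$. For $\Delta=2$ you have $r=j+1\in\{2,3\}$, so $H_r\le H_3=\tfrac{11}{6}$ is an \emph{absolute constant}: there is no $\log q$ (nor even $\log\log q$) to absorb, and the $j=1$ contribution is simply $\sigma_0(q)H_2\ll\sigma_0(q)$ with a universal implied constant. The same remark applies to $\Delta=3$. For $\Delta\ge4$ the dominant $j=1$ term again has $r=2$, so $H_2$ is constant and you obtain exactly $\tfrac{1}{(\Delta-1)!}q^{\Delta-2}$; the subordinate terms $j\ge2$ carry $H_{j+1}\le H_{\Delta+1}\ll\log\Delta$, but since each such term is already down by at least a factor $q/(\Delta-1)\gg q/\log q$ relative to the leading one, these harmonic factors are absorbed with a universal constant. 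Once this bookkeeping is straightened out, your argument is complete and matches the paper's.
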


\begin{proof}
1) The case $\Delta=1$ is obvious. So let $\Delta\geq 2$. For $1\leq i\leq \Delta$ consider
\begin{align*}
Q(i,\Delta):&=\frac{i!}{n!}\sum_{\pi\in S_n}\binom{c_1(\pi^q)+\ldots+c_{\Delta-i+1}(\pi^q)}{i}\\
&=\sum_{\delta=1}^i (-1)^{i-\delta}{i\brack\delta}\frac{1}{n!}\sum_{\pi\in S_n}\left(\sum_{d=1}^{\Delta-i+1}c_d(\pi^q)\right)^{\delta}.
\end{align*}
The above equality is true due to Lemma \ref{3-3-4}. It follows with step 2) in the proof of Proposition \ref{3-5-3} and with Lemma \ref{3-6-1} that
\begin{equation*}
Q(i,\Delta)\ll
\begin{cases}
\sigma_0(q)H_{\Delta} &\textit{if }i=1\\
\sigma_1(q)H_{\Delta-1}^2&\textit{if }i=2\\
q^{i-1}H_{\Delta-i+1}^2&\textit{if }i\geq 3.
\end{cases}
\end{equation*}
In the case $i=2$ we also used the estimate $(\sigma_0(q))^2\leq 2\sigma_1(q)$.
\par\medskip\noindent
2) Finally, we look at
\begin{equation*}
R_{\Delta}:=\frac{1}{n!}\sum_{j=1}^{\Delta}\sum_{\pi\in S_n}\binom{c_1(\pi^q)+\ldots+c_{j+1}(\pi^q)}{\Delta-j}=1+\sum_{i=1}^{\Delta-1}\frac{Q(i,\Delta)}{i!}.
\end{equation*}
Due to the result of step  1), we  find that
\begin{equation*}
R_{\Delta}\ll
\begin{cases}
\sigma_0(q) &\textit{if }\Delta=2\\
\sigma_1(q) &\textit{if }\Delta=3\\
\frac{1}{(\Delta-1)!}q^{\Delta-2} &\textit{if }\Delta\geq 4.
\end{cases}
\end{equation*}
So we are done.
\end{proof}

\par\bigskip\noindent
\textbf{Author information}\\
Stefan-Christoph Virchow, Institut für Mathematik, Universität Rostock\\
Ulmenstr. 69 Haus 3, 18057 Rostock, Germany\\
E-mail: stefan.virchow@uni-rostock.de

\begin{thebibliography}{15}

\bibitem{Gr1}
\textsc{R. L. Graham, D. E. Knuth, O. Patashnik:} \textit{Concrete Mathematics}, Addison-Wesley, Reading, Massachusetts (1991).

\bibitem{Ha1}
\textsc{G. H. Hardy, E. M. Wright:} \textit{An Introduction to the Theory of Numbers}, Clarendon, Oxford (1954).

\bibitem{Ke1}
\textsc{A. Kerber:} \textit{Algebraic Combinatorics Via Finite Group Actions}, BI-Wissenschaftsverlag, Mannheim-Wien-Zürich (1991).

\bibitem{Mo1}
\textsc{V. H. Moll:} \textit{Numbers and Functions}, AMS, Providence, Rhode Island (2012).


\bibitem{Mu1}
\textsc{T. W. Müller, J.-C. Schlage-Puchta:} Character theory of symmetric groups, subgroup growth of Fuchsian groups, and random walks, \textit{Adv. Math.} \textbf{213} (2007), 919-982.


\bibitem{Sa1}
\textsc{B. E. Sagan:} \textit{The Symmetric Group}, Springer, New York (2001).

\bibitem{Sc1}
\textsc{T. Scharf:} Die Wurzelanzahlfunktion in symmetrischen Gruppen, \textit{J. Algebra} \textbf{139} (1991), 446-457.

\end{thebibliography}
\end{document}